\tikzset{>={Latex[width=2mm,length=2mm]}}
\renewcommand{\glsglossarymark}[1]{}
\date{}
\address{Institut f\"{u}r Informatik, Universit\"{a}t Heidelberg, 69120 Heidelberg, Deutschland}
\author[Bertille~Granet]{Bertille Granet}
\email{granet@informatik.uni-heidelberg.de}
\author[Felix~Joos]{Felix Joos}
\email{joos@informatik.uni-heidelberg.de}
\thanks{The research leading to these results was partially supported by the Deutsche Forschungsgemeinschaft (DFG, German Research Foundation) -- 428212407
	and by the DFG under Germany’s Excellence Strategy EXC-2181/1 -- 390900948 (the Heidelberg STRUCTURES Cluster of Excellence).
}
\newcommand{\COMMENT}[1]{}
\newcommand{\NEW}[1]{#1}
\newcommand{\APPENDIX}[1]{}
\newcommand{\notinsubfile}[1]{}
\setlist[enumerate]{itemsep=5pt, topsep=5pt,leftmargin=1.5cm}
\setlist[itemize]{itemsep=5pt, topsep=5pt, label=--}
\newlist{steps}{enumerate}{1}
\setlist[steps,1]{
	label=\textbf{Step \arabic*:},
	ref=\arabic*, 
	wide,
	parsep=0pt,
	itemsep=10pt,
	topsep=10pt}
\Crefname{stepsi}{Step}{Steps}
\crefname{stepsi}{Step}{Steps}
\newlist{case}{enumerate}{1}
\setlist[case,1]{
	label=\textbf{Case \arabic*:},
	ref=\arabic*, 
	wide,
	parsep=0pt,
	itemsep=10pt,
	topsep=10pt}
\Crefname{casei}{Case}{Cases}
\crefname{casei}{Case}{Cases}
\Crefname{enumi}{}{}
\Crefname{thm}{Theorem}{Theorems}
\Crefname{lm}{Lemma}{Lemmas}
\Crefname{cor}{Corollary}{Corollaries}
\Crefname{prop}{Proposition}{Propositions}
\Crefname{claim}{Claim}{Claims}
\Crefname{equation}{}{}
\Crefname{conjecture}{Conjecture}{Conjectures}
\Crefname{figure}{Figure}{Figures}
\Crefname{fact}{Fact}{Facts}
\theoremstyle{definition}
\newtheorem{definition}{Definition}[section]
\theoremstyle{plain}
\newtheorem{claim}{Claim}
\newtheorem{thm}[definition]{Theorem}
\newtheorem{cor}[definition]{Corollary}
\newtheorem{lm}[definition]{Lemma}
\newtheorem*{lm*}{Lemma}
\newenvironment{proofclaim}[1][Proof of Claim]{\begin{proof}[#1]}{\end{proof}}
\newcounter{claimnumber}
\numberwithin{equation}{section}
\renewcommand{\epsilon}{\varepsilon}
\DeclareMathOperator{\Po}{Po}
\DeclareMathOperator{\pma}{pm}
\newcommand{\bP}{\mathbb{P}}
\newcommand{\bE}{\mathbb{E}}
\newcommand{\cE}{\mathcal{E}}
\newcommand{\cM}{\mathcal{M}}
\newcommand{\cP}{\mathcal{P}}
\title{Random perfect matchings in regular graphs}
\begin{document}

\begin{abstract}
	We prove that in all regular robust expanders $G$, every edge is asymptotically equally likely contained in a uniformly chosen perfect matching $M$.
	We also show that given any fixed matching  or spanning regular graph $N$ in $G$, 
	the random variable $|M\cap E(N)|$ is approximately Poisson distributed.
	This in particular confirms a conjecture and a question due to Spiro and Surya, and complements results due to Kahn and Kim who proved that in a regular graph every vertex is asymptotically equally likely contained in a uniformly chosen matching.
	Our proofs rely on the switching method and the fact that simple random walks mix rapidly in robust expanders.
\end{abstract}

\maketitle

\section{Introduction}

A remarkable result due to Kahn and Kim~\cite{kahn1998random} says that in \emph{any} $d$-regular graph $G$,
the probability that a vertex is contained in a uniformly chosen matching in $G$ is $1-(1+o_d(1))d^{-\frac{1}{2}}$.
This shows that the structure of a $d$-regular graph has essentially no impact on the probability that a vertex is contained in a uniformly chosen matching.

In this paper we are interested in uniformly chosen \emph{perfect} matchings.
Then, surely, each vertex is contained in every perfect matching.
Hence, as the statement for vertices is trivial, what about the probability that an edge is contained in a random perfect matching?
Is each edge equally likely contained in a random perfect matching?
A moment of thought reveals that this is wrong in a very strong sense.
In every odd-regular graph with exactly one bridge%
	\COMMENT{Let $G$ be an odd-regular graph and $e$ be a bridge, joining two components $C$ and $C'$ of $G-e$. The endpoints of $e$ are the only vertices of even degree in $G-e$, so $|C|$ and $|C'|$ must be odd.},
the bridge is contained in every perfect matching, while the edges adjacent to the bridge are contained in none of the perfect matchings.
Therefore, in order to avoid a trivial statement further conditions are needed.

Hall's condition for the existence of perfect matchings in bipartite graphs says that the neighbourhood of an (independent) set should be at least as large as the set itself, which is clearly also a necessary condition.
Here, we assume that this property is present in a robust sense in order to avoid the trivial scenarios mentioned above.
More precisely,
let $\nu,\tau>0$ and $G$ be a graph on $n$ vertices.
Then, we define the \emph{$\nu$-robust neighbourhood $RN_{\nu,G}(S)$} of a set $S\subseteq V(G)$ in $G$ to be the set of vertices of $G$ which have at least $\nu n$ neighbours in $S$. 
We say that $G$ is a \emph{robust $(\nu, \tau)$-expander} if $|RN_{\nu,G}(S)| \geq  |S| + \nu n$ for each 
$S\subseteq V(G)$ satisfying $\tau n \leq |S| \leq  (1 - \tau)n$.
Robust expansion is a fairly mild assumption and consequently it proved to be useful in several situations, see for example~\cite{kuhn2013hamilton,gruslys2021cycle,kuhn2015robust}.

We denote by $\cP(G)$ the set of all perfect matchings in $G$
and write $M\sim U(\cP(G))$ to refer to a uniformly chosen matching from $\cP(G)$.
Our main result implies that such matchings $M$ are extremely well-distributed in robust expanders.

\begin{thm}\label{thm:edge}
	For any $\delta>0$, there exists $\tau>0$ such that for all $\nu>0$, there exists $n_0\in \mathbb{N}$ for which the following holds. Let $n\geq n_0$ be even and $d\geq \delta n$. 
	Then, for any $d$-regular robust $(\nu, \tau)$-expander~$G$ on $n$ vertices, $M\sim U(\cP(G))$, and $e\in E(G)$, we have
	\begin{align*}
		\bP[e\in M]= (1+o_n(1))d^{-1}.
	\end{align*}
\end{thm}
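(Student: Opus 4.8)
The plan is to reduce the global statement to a purely \emph{local} comparison between the edges incident to a single vertex, and then to carry out that comparison by a switching argument whose error term is tamed by the rapid mixing of an auxiliary random walk.

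First I would observe that it suffices to show that the edges at a fixed vertex are asymptotically equiprobable. Fix $u\in V(G)$ with neighbours $w_1,\dots ,w_d$. Since $M$ is a perfect matching, exactly one edge at $u$ lies in $M$, so $\sum_{i=1}^d \bP[uw_i\in M]=1$. Writing $\cP_{uw}$ for the perfect matchings of $G$ containing the edge $uw$, we have $\bP[uw_i\in M]/\bP[uw_j\in M]=|\cP_{uw_i}|/|\cP_{uw_j}|$, so if I can prove $|\cP_{uw_i}|=(1+o_n(1))|\cP_{uw_j}|$ uniformly over all $u$ and all $i,j$, then the $d$ summands are all within a $(1+o_n(1))$ factor of one another while summing to $1$, forcing each to equal $(1+o_n(1))d^{-1}$. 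Since $M\in\cP_{uw_i}$ iff $M\setminus\{uw_i\}$ is a perfect matching of $G-u-w_i$, the theorem reduces to showing that $G-u-w_i$ and $G-u-w_j$ have almost the same number of perfect matchings.

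For this I would set up a switching. Given $M\in\cP_{uw_i}$, let $m_M(w_j)$ denote the $M$-partner of $w_j$ and set $z=m_M(w_j)$; if $w_iz\in E(G)$, flip the alternating $4$-cycle $u\,w_i\,z\,w_j$, i.e.\ let $\Phi(M)=(M\setminus\{uw_i,w_jz\})\cup\{uw_j,w_iz\}\in\cP_{uw_j}$. This $\Phi$ is a bijection between the matchings of $\cP_{uw_i}$ with $m_M(w_j)\in N(w_i)$ and those of $\cP_{uw_j}$ with $m_{M'}(w_i)\in N(w_j)$, so the two counts differ exactly by the number of \emph{bad} matchings on each side, namely those in which the displaced partner is a non-neighbour. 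The trouble is that a single $4$-cycle switch fails with probability about $1-d/n$, a constant, so on its own it yields equality only up to a constant factor. To upgrade to a $(1+o_n(1))$ estimate I would iterate: instead of closing the rotation at $w_i$ immediately, reroute the exposed vertex $z_1=m_M(w_j)$ along an alternating path $z_1,x_1,z_2,x_2,\dots$, where at each step the exposed vertex $z_k$ chooses a neighbour $x_k\in N(z_k)$ and the new exposed vertex is $z_{k+1}=m_M(x_k)$, closing the path by adding $z_k w_i$ once $z_k\in N(w_i)$. The sequence $z_1,z_2,\dots$ is a simple random walk on $G$ twisted by the involution $m_M$, whose stationary law is uniform; in a robust $(\nu,\tau)$-expander it mixes in boundedly many steps, so after mixing the exposed vertex lands in $N(w_i)$ (a set of size $\Theta(n)$) at a steady rate, giving a geometric law for the path length. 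Reversing an alternating path interchanges the roles of $w_i$ and $w_j$, and summing the matched contributions over all path lengths identifies $|\cP_{uw_i}|$ and $|\cP_{uw_j}|$ up to a $1+o_n(1)$ factor.

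The main obstacle is precisely this last step: obtaining the sharp $(1+o_n(1))$ precision rather than a constant-factor comparison. This demands, on the one hand, quantitative mixing bounds for the twisted walk, for which robust expansion must be converted into a conductance/spectral-gap estimate strong enough to make the contribution of long paths decay and the residual defect negligible; and, on the other hand, a Hall-type guarantee—supplied by the largeness of the robust neighbourhoods $RN_{\nu,G}(\cdot)$—that enough alternating paths of each length exist and can always be closed, so that the forward and reverse path counts balance and the correspondence stays almost-bijective uniformly in $M$. Keeping all these error terms uniform over every vertex $u$, every pair $w_i,w_j$, and every admissible $G$ is the crux of the argument.
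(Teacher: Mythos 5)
Your overall strategy---comparing classes of perfect matchings by switching along alternating cycles, and controlling the number of such cycles via the rapid mixing of the ``twisted'' walk that alternates a $G$-step with an $M$-step---is exactly the engine of the paper's proof (\cref{lm:ratio,lm:countpaths}). Your top-level reduction is different and rather elegant: instead of computing the ratio $|\cM_1|/|\cM_0|$ between matchings containing $e$ and those avoiding it (which is what the paper does, reading off the value $(1\pm o(1))d^{-1}$ from the asymmetry of the two degree counts in the auxiliary switching graph), you compare $|\cP_{uw_i}|$ with $|\cP_{uw_j}|$ for two edges at a common vertex and recover $d^{-1}$ from the normalisation $\sum_i\bP[uw_i\in M]=1$. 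This buys you a symmetry (reversing the alternating path exchanges the roles of $w_i$ and $w_j$), but it does not reduce the precision required: you still need every individual degree in the auxiliary bipartite graph to equal $(1+o(1))$ times a quantity independent of the particular matching.

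That is where the genuine gap lies. First, the scheme of ``summing the matched contributions over all path lengths'' is not well defined as a counting argument: the number of alternating cycles through $u,w_i,w_j$ of length $2k+2$ grows like $(\delta n)^{k}$, so the sum over $k$ diverges, and the stopping rule ``close once $z_k\in N(w_i)$'' produces weights that depend on the local structure of $M$ and need not balance between the two sides. The clean fix is to count cycles of one \emph{fixed} length; and here your assertion that the twisted walk ``mixes in boundedly many steps'' is exactly what is insufficient, since after $O(1)$ steps the mixing estimate determines the hitting probability of $N(w_i)$ only up to a constant factor---the very precision you set out to beat. One must let the length tend to infinity (the paper takes $\ell=\Theta(\log^2 n)$, so that the error $(1-\alpha/2)^{\Theta(\ell)}$ from \cref{lm:mixing} is $o(1)$), verify that the twisted walk lives on an almost-regular robust outexpander whose stationary distribution is uniform, and then discard the $O(\ell^2 n^{\ell-2})$ walks that are not self-avoiding. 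This quantitative counting statement is precisely \cref{lm:countpaths}, and it is the entire substance of the argument; your proposal correctly identifies it as ``the crux'' but leaves it unproved.
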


In fact much more is true. 
Fix any matching $N$ in $G$, let $M\sim U(\cP(G))$, and consider $X\coloneqq |M\cap N|$.
Then, linearity of expectation and \cref{thm:edge} imply that $\bE[X]=(1+o_n(1))d^{-1}|N|$.
Employing the heuristic that each edge is \emph{independently} present in $M\sim U(\cP(G))$ with probability~$d^{-1}$,
then we expect that $X$ has a binomial distribution with parameters $|N|$ and $d^{-1}$.
This is approximated by a Poisson distribution with parameter $d^{-1}|N|$, whenever $|N|$ grows with $n$.
Our next result confirms this.

To this end, we define the \emph{total variation distance} of two integer-valued random variables $Y$ and $Z$ as $d_{\rm TV}(Y,Z)\coloneqq \frac{1}{2}\sum_{k\in \mathbb{Z}}|\bP[Y=k]-\bP[Z=k]|$, which measures how close two distributions are. Moreover, we write $Y\sim \Po(\lambda)$ if $Y$ is a random variable which follows a Poisson distribution with parameter $\lambda$.

\begin{thm}\label{thm:main}
	For any $\delta> 0$, there exists $\tau >0$ such that for all $\nu>0$, there exists $n_0\in \mathbb{N}$ for which the following holds. 
	Let $n\geq n_0$ be even and $d\geq \delta n$. Then, for any $d$-regular robust $(\nu, \tau)$-expander $G$ on $n$ vertices, $M\sim U(\cP(G))$, any matching $N$ in $G$, $X\coloneqq |M\cap N|$, and $Y\sim \Po(d^{-1}|N|)$, we have 
	$d_{\rm TV}(X,Y)=o_n(1)$. 
\end{thm}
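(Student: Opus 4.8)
The plan is to apply the Chen--Stein method in its coupling form (Barbour--Holst--Janson). Set $m\coloneqq|N|$, enumerate $N=\{e_1,\dots,e_m\}$, and write $I_j\coloneqq\mathbf{1}[e_j\in M]$ and $p_j\coloneqq\bP[e_j\in M]$, so that $X=\sum_{j=1}^m I_j$ and $\bE[X]=\sum_j p_j$. \cref{thm:edge} gives $p_j=(1+o_n(1))d^{-1}$ uniformly, whence $\sum_j p_j=(1+o_n(1))d^{-1}m$; since $m\le n/2$ and $d\ge\delta n$ we have $d^{-1}m\le(2\delta)^{-1}=O(1)$, so $|\sum_j p_j-d^{-1}m|=o_n(1)$ and $d_{\rm TV}(\Po(\sum_j p_j),\Po(d^{-1}m))\le|\sum_j p_j-d^{-1}m|=o_n(1)$ by the standard bound $d_{\rm TV}(\Po(\mu),\Po(\mu'))\le|\mu-\mu'|$. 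Hence it suffices to bound $d_{\rm TV}(X,\Po(\sum_j p_j))$, for which the coupling form of Chen--Stein yields
\[
d_{\rm TV}\Big(X,\Po\big(\textstyle\sum_j p_j\big)\Big)\le\min\Big(1,\big(\textstyle\sum_j p_j\big)^{-1}\Big)\sum_{j=1}^m p_j\,\bE\big|X-1-Z_j\big|,
\]
where $Z_j$ is any random variable, coupled to $X$, with $Z_j\stackrel{d}{=}(X-1\mid I_j=1)$.

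I would realise $Z_j$ as $|M_j\cap N|-1$, where $M_j\sim U(\cP(G)\mid e_j\in M)$ is coupled to $M$; then $X-1-Z_j=|M\cap N|-|M_j\cap N|$, and the task reduces to constructing, for each $j$, a coupling of $M$ and $M_j$ that rarely changes the number of $N$-edges. This is where the switching method enters: given $M$ with $e_j=uv\notin M$, one reroutes $M$ to include $e_j$ by a short alternating switch --- delete the two $M$-edges at $u$ and $v$, insert $e_j$, and repair the two exposed vertices along a bounded-length alternating path, which exists because $G$ is a dense robust expander. The deleted edges at $u,v$ lie outside $N$ (as $N$ is a matching containing $e_j$), so they never affect $|M\cap N|$; thus the switch toggles only $O(1)$ edges besides $e_j$, and each of these lies in $N$ only with probability $O(n^{-1})$, since $N$ contains at most $n/2$ of the $\binom{n}{2}$ pairs and the affected vertices are well spread out --- quantified through the rapid mixing of simple random walks in the expander. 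Consequently the membership of every $N$-edge other than $e_j$ is unchanged with probability $1-O(n^{-1})$, so $\bE\big||M\cap N|-|M_j\cap N|\big|=o_n(1)$ uniformly in $j$, and the displayed sum is at most $\big(\sum_j p_j\big)\cdot o_n(1)=O(1)\cdot o_n(1)=o_n(1)$, proving the theorem.

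The main obstacle is constructing a coupling that is simultaneously \emph{local} (changing few edges, hence few $N$-edges) and \emph{exact} (so that $M_j$ has precisely the conditional law $U(\cP(G)\mid e_j\in M)$). A single alternating switch defines a bipartite switching graph between $\{M:e_j\notin M\}$ and $\{M:e_j\in M\}$ that is only \emph{approximately} biregular; the marginal estimate of \cref{thm:edge} is exactly the statement that its two sides have ratio $(1+o_n(1))d$, but upgrading this to an exact coupling requires controlling the switch multiplicities down to a $(1+o_n(1))$ factor, again via the switching analysis and random-walk mixing that underlie \cref{thm:edge}. An essentially equivalent route would replace the coupling by the method of moments: establish the $k$-point bound $\bP[e_{i_1},\dots,e_{i_k}\in M]=(1+o_n(1))d^{-k}$ for disjoint edges by iterating the conditioning $M\mid e_{i_1},\dots,e_{i_{k-1}}\in M$ (equivalently deleting the $2(k-1)$ endpoints and re-running the \cref{thm:edge} argument on the resulting near-regular robust expander), and then convert convergence of all factorial moments to $\big(\sum_j p_j\big)^k$ into a total-variation bound; there the difficulty migrates into maintaining uniform error control as $k$ grows and into handling the loss of exact regularity after deletion.
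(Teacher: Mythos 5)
Your strategy (Chen--Stein in coupling form) is genuinely different from the paper's, which never builds a coupling: the paper estimates the ratios $\bP[X=k]/\bP[X=k-1]$ directly via a switching graph between $\cM_k$ and $\cM_{k-1}$ (\cref{lm:ratio}), and then recovers the individual probabilities from these ratios together with the normalisation $\sum_k\bP[X=k]=1$ and a Markov tail bound. That route only requires the switching graph to be \emph{approximately} biregular, which is exactly what the random-walk counting delivers. However, as written your argument has a genuine gap, in fact two.

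First, the displayed Chen--Stein inequality is not the correct one. In the size-bias/coupling form the relevant quantity is $\bE\lvert X - Z_j\rvert$ with $Z_j\stackrel{d}{=}(X-1\mid I_j=1)$ (equivalently $\bE\lvert X - I_j - \tilde Z_j\rvert$ plus an explicit $p_j$ term, where $\tilde Z_j\stackrel{d}{=}(X-I_j\mid I_j=1)$), not $\bE\lvert X-1-Z_j\rvert$. This is not cosmetic: under your own ideal coupling (where $M_j=M$ if $e_j\in M$, and otherwise the switch adds $e_j$ and disturbs no other $N$-edge) one has $\lvert X-1-Z_j\rvert=1$ whenever $e_j\notin M$, so $\bE\lvert X-1-Z_j\rvert=1-p_j$ and your bound evaluates to roughly $\min(1,\lambda^{-1})\sum_j p_j=\min(\lambda,1)=\Theta(1)$ rather than $o_n(1)$. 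With the correct form the ideal coupling would give $\min(1,\lambda^{-1})\sum_j p_j^2=O(d^{-1})$, so the error is repairable, but the inequality you actually wrote does not prove the theorem even granting the coupling.

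Second, and more seriously, the coupling itself is never constructed --- you flag this yourself as ``the main obstacle'' and then do not overcome it. The difficulty is real: a single bounded-length alternating switch defines a bipartite graph between $\{M:e_j\notin M\}$ and $\{M:e_j\in M\}$ whose degrees are only controlled up to $(1\pm o(1))$ factors, and approximate biregularity does not by itself yield an \emph{exact} coupling of $U(\cP(G))$ with $U(\cP(G)\mid e_j\in M)$ that is local with probability $1-o(1)$; one would need a transportation/flow argument on the switching graph, and also a justification (beyond the heuristic ``the affected vertices are well spread out'') that the repair path avoids $N$ with probability $1-o(1)$ --- the endpoints of the repair path are \emph{not} uniformly distributed, and making this precise is essentially the random-walk mixing analysis of \cref{lm:countpaths}. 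The same deferral occurs in your alternative moments route, where the uniform control of $\bP[e_{i_1},\dots,e_{i_k}\in M]$ over growing $k$ is exactly the content of \cref{lm:ratio}. So the proposal identifies a plausible architecture but omits the central technical construction; the paper's ratio-based argument is designed precisely to avoid needing it.
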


The fact that $N$ is a matching is not crucial for our argument, however note for example that if $N$ is a star, then $X$ is a $\{0,1\}$-valued random variable. Hence, $X$ can only converge to a Poisson distribution if $N$ is somewhat spread out.
In particular, when $N$ is a spanning $r$-regular graph for some fixed $r$, we can derive an analogue of \cref{thm:main} (see \cref{sec:conclusion}), which answers a question of Spiro and Surya \cite{spiro2022counting}.

\Cref{thm:main} has some interesting consequences.
We define $\pma(G)\coloneqq |\cP(G)|$ and suppose~$G$ and~$M$ are as in Theorem~\ref{thm:main}.
Let $N$ be a perfect matching in $G$.
Then, \cref{thm:main} implies that
\begin{align*}\label{eq:pm}
	\frac{\pma(G-N)}{\pma(G)}=\bP[M\cap N =\emptyset]=(1+o_n(1)) e^{-\frac{n}{2d}}.
\end{align*}
For a graph $G$ with a perfect matching, we denote by $G^\circ$ a subgraph of $G$ where one perfect matching is removed.
\NEW{Various combinatorial problems can be expressed as determining $\frac{\pma(G^\circ)}{\pma(G)}$. For example, when $G=K_{\frac{n}{2}, \frac{n}{2}}$, this ratio is equal to the probability that a random permutation of order $\frac{n}{2}$ is fixed-point-free, and it is well known that this probability equals $(1+o_n(1))e^{-1}$. The case when $G=K_n$ also has a combinatorial interpretation, see \cite{johnston2022deranged}.}

Let $K_{a\times b}$ denote the complete multipartite graph with $a$ parts, each of size $b$.
As an interpolation between the cases $K_{\frac{n}{2},\frac{n}{2}}$ and $K_n$, one may ask whether ${\pma(K_{r\times \frac{n}{r}}^\circ)}({\pma(K_{r\times \frac{n}{r}})})^{-1}$ converges to a limit.
Johnston, Kayll, and Palmer~\cite{johnston2022deranged} formulated this as a conjecture (and conjectured the limit value).
Recently this was resolved by Spiro and Surya~\cite{spiro2022counting}.
As all these graphs are robust expanders (excluding $K_{\frac{n}{2},\frac{n}{2}}$; we discuss bipartite graphs in \cref{sec:conclusion}), Theorem~\ref{thm:main} reproves the result due to Spiro and Surya~\cite{spiro2022counting}.

In fact, Spiro and Surya~\cite{spiro2022counting} also speculate whether for any $\alpha>\frac{1}{2}$, all regular graphs $G$ on an even number $n$ of vertices with $\delta(G)\geq \alpha n$ satisfy $\frac{\pma(G^\circ)}{\pma(G)}\to e^{-\frac{1}{2\alpha}}$, but consider this statement far too strong to be true. As it is trivial to show that graphs on $n$ vertices with $\delta(G)\geq (\frac{1}{2}+o_n(1))n$ are robust expanders, Theorem~\ref{thm:main} shows that this statement is actually true.

\bigskip

Our proof strategy is as follows. Let $G, M, N$, and $X$ be as in the statement of \cref{thm:main}.
We estimate the ratios of the form $\frac{\bP[X=k]}{\bP[X=k-1]}$ via the so-called switching method.
Knowing all relevant fractions of this type already exhibits the distribution of $X$, which has the advantage that the probabilities $\bP[X=k]$ do not need to be calculated directly.

The switching method is implemented as follows (see \cref{lm:ratio}). Fix a positive integer~$k$ and denote by $\cM_k$ and $\cM_{k-1}$ the sets of perfect matchings in $G$ which contain precisely $k$ and $k-1$ edges of $N$, respectively.
Then, construct an auxiliary bipartite graph $H$ on vertex classes~$\cM_k$ and $\cM_{k-1}$ by joining two perfect matchings $M\in \cM_k$ and $M'\in \cM_{k-1}$ if there is a cycle $C$ of length $2\ell$ in $G$
which contains precisely one edge of $N$ and alternates between edges of~$M$ and~$M'$.
(In other words, $M\in \cM_k$ and $M'\in \cM_{k-1}$ are adjacent in $H$ if $N\cap M'\subseteq N\cap M$ and the extra edge in $(N\cap M)\setminus M'$ can be `switched out' of $M$ to obtain $M'$ by exchanging $\ell$ edges of~$M$ for $\ell$ edges of $M'$, where these $2\ell$ edges altogether form a cycle.)

Note that if all perfect matchings in $\cM_k$ have degree (roughly) $d_k$ in $H$, while all perfect matchings in $\cM_{k-1}$ have degree (roughly) $d_{k-1}$, then $d_k|\cM_k|\approx e(H)\approx d_{k-1}|\cM_{k-1}|$. Hence, $\frac{\bP[X=k]}{\bP[X=k-1]}=\frac{|\cM_k|}{|\cM_{k-1}|}\approx\frac{d_{k-1}}{d_k}$.
Therefore, the crux of the proof consists in precisely estimating the number of such alternating cycles.

Counting the number of cycles of a certain length can be achieved using random walks as follows (see \cref{lm:countpaths}).
Given a $d$-regular graph, note that the number of walks of length $\ell$ starting at $u$ is precisely $d^\ell$, and so
the probability that a simple random walk that starts in $u$ is in $v$ after $\ell$ steps is equal to the number of walks from $u$ to $v$ of length~$\ell$ divided by $d^\ell$.
Since simple random walks are rapidly mixing in robust expanders, one can precisely estimate such probabilities, and therefore the number of such walks. 
A simple counting argument can eliminate those walks which are not paths, and so we can accurately count the number of cycles of fixed length in a regular robust expander. 
In practice, we have to consider simple random walks that use in every second step an edge from a fixed perfect matching~$M$. However, this additional technicality does not affect the mixing properties of such walks and so we can still precisely count them.

\bigskip

We remark that Spiro and Surya \cite{spiro2022counting} also used the switching method, which is common for this type of problems. 
Our contribution is to use longer cycles and perform the analysis with Markov chains; although the intuition is that the estimations become less precise with larger cycles, we employ key properties of Markov chains to show that in fact the \NEW{opposite} is true. Besides the fact that our results are substantially more general, the analysis also becomes significantly shorter and cleaner.

\section{Proof}

First, we introduce a directed version of robust expanders and state some results about such digraphs and random walks.
We then use these results to precisely count the number of walks of a specific length in regular robust expanders (\cref{lm:countpaths}).
This result plays a crucial role in the switching argument (\cref{lm:ratio}), from which \cref{thm:edge,thm:main} are derived.

\subsection{Robust outexpanders}

Robust outexpanders are the directed version of robust expanders.
Let $\nu,\tau>0$ and $D$ be a directed graph on $n$ vertices. 
The \emph{$\nu$-robust outneighbourhood $RN_{\nu, D}^+(S)$} of a set $S\subseteq V(D)$ in $D$ consists of all the vertices of $D$ which have at least $\nu n$ inneighbours in~$S$. We say that $D$ is a \emph{robust $(\nu, \tau)$-outexpander} if $|RN_{\nu, D}^+(S)|\geq |S|+\nu n$ for each $S\subseteq V(D)$ satisfying $\tau n \leq |S|\leq (1-\tau)n$.

We call a digraph $D$ on $n$ vertices \emph{$(\delta, f)$-almost regular} if all vertices of $D$ have in- and outdegree $(1\pm f)\delta n$.
(Obviously, we call a digraph \emph{regular} if it is $(\delta,0)$-regular for some $\delta$.)
It was observed in \cite{kuhn2013hamilton} that almost regular robust outexpanders contain, as a subdigraph, a regular robust outexpander of similar degree.

To state our results, we write $a\ll b$ for $a,b\in (0,1)$ if $a$ is sufficiently small in terms of $b$. Given an integer $n$, we denote $[n]\coloneqq \{1, \dots, n\}$ (in particular, $[n]=\emptyset$ if $n\leq 0$). 

\begin{lm}[{\cite[Lemma 5.2]{kuhn2013hamilton}}]\label{lm:regrob}
	Let $0<\frac{1}{n}\ll \varepsilon\ll \nu \leq \tau\ll \delta \leq 1$ and $\frac{1}{n}\leq f\leq \varepsilon$. 
	Let $D$ be a $(\delta, f)$-almost regular robust $(\nu, \tau)$-outexpander on $n$ vertices. 
	Then, $D$ contains a spanning $(1-f^{\frac{1}{2}})\delta n$-regular subdigraph which is a robust $(\frac{\nu}{2}, \tau)$-outexpander.
\end{lm}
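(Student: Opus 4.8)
The plan is to produce the subdigraph by a max-flow/min-cut argument and then to check that robust outexpansion survives because only few arcs are deleted at each vertex. Set $d'\coloneqq \lfloor(1-f^{1/2})\delta n\rfloor$ (rounding affects the estimates below only negligibly). Build a network with a source $s$, a sink $t$, and two copies $v^+,v^-$ of each $v\in V(D)$: put an arc $s\to v^+$ and an arc $v^-\to t$ of capacity $d'$, and an arc $u^+\to v^-$ of capacity $1$ whenever $uv\in E(D)$. An integral $s$--$t$ flow of value $d'n$ must saturate every source and sink arc, so by flow conservation its support (the arcs $u^+v^-$ carrying one unit) is the arc set of a spanning $d'$-regular subdigraph $D'\subseteq D$. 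By the max-flow/min-cut theorem together with integrality, it suffices to show that every cut has capacity at least $d'n$. Writing a cut through $A\coloneqq\{u:u^+\text{ on the source side}\}$ and $B\coloneqq\{v:v^-\text{ on the source side}\}$, its capacity is $d'(n-|A|)+d'|B|+e_D(A,V(D)\setminus B)$, where $e_D(X,Y)$ denotes the number of arcs with tail in $X$ and head in $Y$. Hence the whole task reduces to establishing
\[
e_D(A,V(D)\setminus B)\ \ge\ d'\bigl(|A|-|B|\bigr)\qquad\text{for all }A,B\subseteq V(D).
\]

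This is trivial when $|A|\le|B|$, so I would assume $a\coloneqq|A|>b\coloneqq|B|$. The first and main tool is the crude global count $e_D(A,V(D)\setminus B)\ge a(1-f)\delta n-b(1+f)\delta n$, obtained since $A$ sends at least $a(1-f)\delta n$ arcs out but at most $b(1+f)\delta n$ of them can enter $B$. A one-line computation shows this already yields the required inequality whenever $a-b\ge 2f^{1/2}n$, i.e.\ everywhere except a thin near-diagonal strip $0<a-b<2f^{1/2}n$. Inside this strip I would argue by the size of $A$. If $a\le\tau n$, counting for each vertex of $A$ its out-arcs missing $B$ gives $e_D(A,V(D)\setminus B)\ge a((1-f)\delta n-b)$, which suffices because $a\le\tau n\ll\delta n$; the symmetric in-degree count over $V(D)\setminus B$ disposes of $a\ge(1-\tau)n$, where the strip condition forces $|V(D)\setminus B|<2\tau n$.

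The remaining and genuinely hard regime is $\tau n\le a\le(1-\tau)n$ together with $0<a-b<2f^{1/2}n$: here degree counting is too weak, and this is exactly where robust outexpansion is indispensable. Since $a$ lies in the expansion range, $|RN_{\nu,D}^+(A)|\ge a+\nu n$, and as $|B|=b\le a$ the set $RN_{\nu,D}^+(A)\setminus B$ has size at least $(a-b)+\nu n\ge\nu n$; every vertex of $RN_{\nu,D}^+(A)$ receives at least $\nu n$ arcs from $A$, so $e_D(A,V(D)\setminus B)\ge\nu^2 n^2$. Because $f\le\varepsilon\ll\nu$, this beats the target $d'(a-b)\le 2f^{1/2}\delta n^2$. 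This verifies the cut condition in all cases, so the desired $d'$-regular spanning subdigraph $D'$ exists.

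Finally I would transfer the expansion to $D'$. At each vertex the number of deleted in-arcs is $\deg_D^-(v)-d'\le(1+f)\delta n-(1-f^{1/2})\delta n\le 2f^{1/2}\delta n$, so for any $S$ every vertex of $RN_{\nu,D}^+(S)$ still has at least $\nu n-2f^{1/2}\delta n\ge\frac{\nu}{2}n$ in-neighbours in $S$ within $D'$. Thus $RN_{\nu,D}^+(S)\subseteq RN_{\nu/2,D'}^+(S)$, and the robust $(\nu,\tau)$-outexpansion of $D$ yields robust $(\frac{\nu}{2},\tau)$-outexpansion of $D'$. The main obstacle throughout is the near-diagonal middle regime of the cut condition: it is the only point at which the hypotheses cannot be met by degree counting, and precisely the place where robust outexpansion must be used.
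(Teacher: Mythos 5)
Your proof is correct, but it takes a different route from the paper, which does not prove this statement at all: it is imported wholesale from K\"uhn and Osthus (Lemma~5.2 of the cited work). The authors' intended reduction is to prescribe, for each vertex $x$, the degrees $n_x^\pm\coloneqq d_D^\pm(x)-(1-f^{\frac{1}{2}})\delta n=(1\pm f^{\frac{1}{2}})f^{\frac{1}{2}}\delta n$, invoke the cited lemma to extract a \emph{sparse} spanning subdigraph with exactly these degrees, and take its complement in $D$; robustness survives because only $O(f^{\frac{1}{2}})\delta n$ arcs are deleted at each vertex, exactly as in your final step. You instead build the \emph{dense} regular subdigraph directly by max-flow/min-cut, which is self-contained but forces you to verify the cut condition $e_D(A,V(D)\setminus B)\geq d'(|A|-|B|)$ in the awkward near-diagonal regime, and this is where robust outexpansion enters your argument (the complement formulation pushes that difficulty into the cited lemma instead). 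Your case analysis checks out: the global degree count settles $|A|-|B|\geq 2f^{\frac{1}{2}}n$ via the inequality $f^{\frac{1}{2}}(a-b)\geq f(a+b)$; the per-vertex counts handle $|A|\leq\tau n$ and $|A|\geq(1-\tau)n$ because $\tau\ll\delta$; and in the middle regime the bound $e_D(A,V(D)\setminus B)\geq\nu^2n^2\geq 2f^{\frac{1}{2}}\delta n^2$ uses $f\leq\varepsilon\ll\nu$ correctly. The only cosmetic point is the integrality of $(1-f^{\frac{1}{2}})\delta n$, which you address with a floor and which the paper's statement glosses over as well. What your approach buys is independence from the external reference; what the paper's approach buys is brevity and an exactly regular subdigraph without rounding discussion, at the cost of a black box.
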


\COMMENT{Note that the proof of \cite[Lemma 5.2]{kuhn2013hamilton} also holds if $\delta n^{-\frac{1}{2}}\leq \xi$ (rather than $\frac{1}{n}\ll \xi$). For each $x\in V(D)$, let $n_x^\pm \coloneqq d_D^\pm(x)-(1-f^{\frac{1}{2}})\delta n$ and note that $n_x^+,n_x^-=(1\pm f)\delta n-(1-f^{\frac{1}{2}})\delta n=(1\pm f^{\frac{1}{2}})f^{\frac{1}{2}}\delta n$.
Apply \cite[Lemma 5.2]{kuhn2013hamilton} with $1, f^{\frac{1}{2}}$, and $f^{\frac{1}{2}}\delta$ playing the roles of $q, \varepsilon$, and $\xi$ to obtain a spanning $D'\subseteq D$ where $d_{D'}^\pm(x)=n_x^\pm$ for each $v\in V(D)$. Then, $D\setminus D'$ is $(1-f^{\frac{1}{2}})\delta n$-regular, and is a robust $(\frac{\nu}{2}, \tau)$-outexpander since at most $3\sqrt{\varepsilon}n$ edges have been deleted at each vertex.}

The following lemma shows that in robust outexpanders on $n$ vertices there are $\Theta(n^{\ell-1})$ walks of length $\ell$ between any pair of vertices.

\begin{lm}\label{lm:walkrob}
	Let $0<\frac{1}{n}\ll \nu \leq \tau \leq 1$ and $\ell\geq \nu^{-1}+1$. Let $D$ be a robust $(\nu, \tau)$-outexpander on $n$ vertices with $d_D^+(v),d_D^-(v)\geq \tau n$ for each $v\in V(D)$. Then, $D$ contains at least $(\nu n)^{\ell-1}$ $(u,v)$-walks of length $\ell$ for any distinct $u,v\in V(D)$.           
\end{lm}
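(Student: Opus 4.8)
The plan is to track, for each length $i$, the set of vertices reachable from $u$ by \emph{many} walks of length $i$, and to show that robust outexpansion forces this set to grow by $\nu n$ vertices per step until it is almost all of $V(D)$, after which the minimum-indegree hypothesis lets us hit the specific target $v$. Write $a_i(x)$ for the number of $(u,x)$-walks of length $i$ in $D$, and set
\[
	S_i \coloneqq \{x\in V(D) : a_i(x)\ge (\nu n)^{i-1}\}.
\]
The one identity driving everything is that $a_{i+1}(x)=\sum_{y\in N_D^-(x)}a_i(y)$, since every $(u,x)$-walk of length $i+1$ is a $(u,y)$-walk of length $i$ followed by an edge $yx$. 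Hence, if $x$ has at least $\nu n$ inneighbours in $S_i$, then $a_{i+1}(x)\ge \nu n\cdot(\nu n)^{i-1}=(\nu n)^i$; in other words,
\[
	RN_{\nu,D}^+(S_i)\subseteq S_{i+1}.
\]

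Next I would analyse the growth of $s_i\coloneqq|S_i|$. Since every outneighbour of $u$ lies in $S_1$, we have $s_1\ge d_D^+(u)\ge \tau n$. I claim the invariant $s_i\ge \min\{\tau n+(i-1)\nu n,\,(1-\tau+\nu)n\}$ holds for all $i$, proved by induction using three regimes (note both terms in the minimum are at least $\tau n$, so $s_i\ge\tau n$ throughout). While $\tau n\le s_i\le (1-\tau)n$, robust outexpansion applies directly to $S_i$ and gives $s_{i+1}\ge |RN_{\nu,D}^+(S_i)|\ge s_i+\nu n$. If instead $s_i>(1-\tau)n$, I apply robust outexpansion to an arbitrary subset of $S_i$ of size $\lfloor(1-\tau)n\rfloor$ and use monotonicity of $RN^+_{\nu,D}$ to obtain $s_{i+1}\ge(1-\tau+\nu)n$, up to a negligible rounding term. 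Finally, once $s_i\ge(1-\tau+\nu)n$, every vertex $x$ has at least $d_D^-(x)-(n-s_i)\ge\tau n-(\tau-\nu)n=\nu n$ inneighbours in $S_i$, so $RN_{\nu,D}^+(S_i)=V(D)$ and the set stays spanning thereafter. These three cases establish the invariant, and in particular $s_i$ reaches $(1-\tau+\nu)n$.

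It then remains to convert a large $S_{\ell-1}$ into many walks ending at $v$. By the same identity, $a_\ell(v)=\sum_{y\in N_D^-(v)}a_{\ell-1}(y)\ge |N_D^-(v)\cap S_{\ell-1}|\,(\nu n)^{\ell-2}$, and if $s_{\ell-1}\ge(1-\tau+\nu)n$ then $|N_D^-(v)\cap S_{\ell-1}|\ge d_D^-(v)-(n-s_{\ell-1})\ge \nu n$, giving $a_\ell(v)\ge(\nu n)^{\ell-1}$, as required. The crux is the step budget: the invariant yields $s_{\ell-1}\ge(1-\tau+\nu)n$ as soon as $\tau n+(\ell-2)\nu n\ge(1-\tau+\nu)n$, i.e. $\ell\ge 3+(1-2\tau)\nu^{-1}$, and since $\tau\ge\nu$ one checks $3+(1-2\tau)\nu^{-1}\le 1+\nu^{-1}\le\ell$. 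Thus the hypothesis $\ell\ge\nu^{-1}+1$ is exactly what guarantees enough steps, and getting this tight interplay between the per-step gain $\nu n$ and the available length right is the main point. The crude per-step multiplier $\nu n$ (rather than the true growth of order $n$) is affordable precisely because the target $(\nu n)^{\ell-1}$ lies far below the truth $\Theta(n^{\ell-1})$, which also absorbs the rounding losses above.
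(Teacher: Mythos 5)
Your proof is correct and follows essentially the same route as the paper: both iterate the robust outneighbourhood operator starting from $N_D^+(u)$, show the reachable set grows by $\nu n$ per step until it has size at least $(1-\tau+\nu)n$, and then use $d_D^-(v)\geq \tau n$ to find at least $\nu n$ suitable penultimate vertices. The only cosmetic difference is that you define $S_i$ as the set of all vertices admitting at least $(\nu n)^{i-1}$ walks from $u$ and verify $RN_{\nu,D}^+(S_i)\subseteq S_{i+1}$, whereas the paper sets $S_{i+1}\coloneqq RN_{\nu,D}^+(S_i)$ outright and counts the walks threading through these sets.
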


\begin{proof}
	Let $S_0\coloneqq \{u\}$ and $S_1\coloneqq N_D^+(u)$. For each $i\in [\ell-2]$, define $S_{i+1}\coloneqq RN_{\nu, D}^+(S_i)$. 
	For each $w\in N_D^-(v)\cap S_{\ell-1}$, there exist at least $(\nu n)^{\ell-2}$ $(u,w)$-walks $uu_1u_2\dots u_{\ell-2}w$ where $u_i\in S_i$ for each $i\in [\ell-2]$. 
	Since~$D$ is a robust $(\nu, \tau)$-outexpander with $d_D^+(w),d_D^-(w)\geq \tau n$ for each $w\in V(D)$, we have $|S_{i+1}|\geq \min\{(\tau+i\nu)n, (1-\tau+\nu)n\}$ for each $i\in [\ell-2]$. 
	In particular, $|N_D^-(v)\cap S_{\ell-1}|\geq \nu n$, so there are at least $(\nu n)^{\ell-1}$ $(u,v)$-walks $uu_1u_2\dots u_{\ell-1}v$ where $u_i\in S_i$ for each $i\in [\ell-1]$.
\end{proof}

\subsection{Random walks}

In the following we consider simple random walks $(X_t)_{t\geq 0}$ on digraphs~$D$;
that is, $\bP[X_{t+1}=v\mid X_t=u]=\frac{1}{d_D^+(u)}$ for each $u\in V(D)$ and $v\in N_D^+(u)$.

\NEW{As in the case of} regular graphs, the stationary distribution $(\sigma_v)_{v\in V(D)}$ of a regular robust outexpander $D$ on $n$ vertices is the uniform distribution; that is $\sigma_v\coloneqq n^{-1}$ for each $v\in V(D)$ (observe that random walks on robust outexpanders are aperiodic and irreducible by \cref{lm:walkrob}).

The next result yields a lower bound on the speed of convergence of a random walk to its stationary distribution.

\begin{lm}[{Joos and K\"uhn \cite[\NEW{Lemma 3.2}]{joos2022fractional}}]\label{lm:mixing}
	Let $(X_t)_{t\geq 0}$ be a Markov chain with state space $[n]$, transition matrix $P$ with $P(i,j)\neq 0$ for all $i,j\in [n]$, and (unique) stationary distribution given by $(\sigma_i)_{i\in [n]}$ with $\sigma_i\neq 0$ for all $i\in [n]$.
	Let $\alpha\coloneqq \min_{i,j,k\in [n]}\frac{P(i,j)}{\sigma_k}$ and $\beta\coloneqq \max_{i,j,k\in [n]}\frac{P(i,j)}{\sigma_k}$. Then, each $t\geq 2+2\alpha^{-1}\log \beta$ and $i\in [n]$ satisfy
	\[\mathbb{P}[X_t=i]=\Big(1\pm \left(1-\frac{\alpha}{2}\right)^t\Big)\sigma_i.\]
\end{lm}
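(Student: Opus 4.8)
The plan is to prove this via a Doeblin-type minorisation together with a sup-norm contraction of the density of $X_t$ relative to the stationary distribution. First I would record the two pointwise consequences of the definitions of $\alpha$ and $\beta$: specialising $k=j$ gives $\alpha\sigma_j\le P(i,j)\le\beta\sigma_j$ for all $i,j\in[n]$. The lower bound is the crucial minorisation driving the contraction, while the upper bound will be used to control an arbitrary starting distribution after a single step.

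Next I would track the relative density $f_t(i)\coloneqq\bP[X_t=i]/\sigma_i$. Writing $\bP[X_{t+1}=j]=\sum_i\bP[X_t=i]P(i,j)$ and using stationarity $\sum_i\sigma_iP(i,j)=\sigma_j$, one sees that
\[
f_{t+1}(j)=\sum_{i}w_{ij}f_t(i),\qquad w_{ij}\coloneqq\frac{\sigma_iP(i,j)}{\sigma_j},
\]
where for each fixed $j$ the weights $(w_{ij})_i$ form a probability distribution, so $f_{t+1}(j)$ is a convex combination of the values $f_t(i)$; moreover the minorisation gives $w_{ij}\ge\alpha\sigma_i$. Setting $g_t\coloneqq f_t-1$, so that $\sum_i\sigma_ig_t(i)=0$ (i.e. $g_t$ has zero mean under $\sigma$), I would split
\[
g_{t+1}(j)=\sum_i\alpha\sigma_ig_t(i)+\sum_i(w_{ij}-\alpha\sigma_i)g_t(i)=\sum_i(w_{ij}-\alpha\sigma_i)g_t(i),
\]
the first sum vanishing by the zero-mean property. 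Since the residual weights $w_{ij}-\alpha\sigma_i$ are nonnegative and sum to $1-\alpha$, each $g_{t+1}(j)$ lies between $(1-\alpha)\min_ig_t(i)$ and $(1-\alpha)\max_ig_t(i)$; hence $\|g_{t+1}\|_\infty\le(1-\alpha)\|g_t\|_\infty$, and iterating gives $\|g_t\|_\infty\le(1-\alpha)^{t-1}\|g_1\|_\infty$.

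For the base case I would use the second half of the first observation: the bounds $\alpha\sigma_j\le P(i,j)\le\beta\sigma_j$ force $f_1(j)=\sigma_j^{-1}\sum_i\bP[X_0=i]P(i,j)\in[\alpha,\beta]$ for every initial distribution, so $\|g_1\|_\infty\le\max\{1-\alpha,\beta-1\}\le\beta$. This yields $\bP[X_t=i]=(1\pm(1-\alpha)^{t-1}\beta)\sigma_i$, and it remains to check $(1-\alpha)^{t-1}\beta\le(1-\alpha/2)^t$ once $t\ge2+2\alpha^{-1}\log\beta$. This is a short computation: taking logarithms it reduces to $\log\beta\le(t-1)\log\frac{1-\alpha/2}{1-\alpha}+\log(1-\alpha/2)$, and the elementary inequality $\log\frac{1-\alpha/2}{1-\alpha}\ge\alpha/2$ (valid for $\alpha\in(0,1)$), together with the additive slack in the threshold that absorbs the single-step offset and the $\log(1-\alpha/2)$ correction, makes this hold.

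I expect the contraction step to be the conceptual heart of the argument: the strictly positive minorisation $w_{ij}\ge\alpha\sigma_i$ is exactly what lets one peel off an $\alpha$-fraction that is annihilated by the zero-mean condition, leaving a genuine $(1-\alpha)$ contraction in $\ell_\infty$ regardless of the starting distribution. The only genuinely fiddly part is the final constant-chasing, where the multiplicative error must be repackaged from $(1-\alpha)^{t-1}\beta$ into the clean form $(1-\alpha/2)^t$; it is precisely this repackaging that dictates the threshold $t\ge2+2\alpha^{-1}\log\beta$ appearing in the statement.
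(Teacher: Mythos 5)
Your argument is the classical Doeblin-type contraction proof, which is the standard (and surely the intended) route for a statement of this form; the paper itself only quotes the lemma from Joos and K\"uhn without proof, so I assess your write-up on its own terms. The core is correct and complete: the weights $w_{ij}=\sigma_iP(i,j)/\sigma_j$ are stochastic in $i$ by stationarity, the minorisation $w_{ij}\ge\alpha\sigma_i$ together with the zero-mean property $\sum_i\sigma_ig_t(i)=0$ gives $\|g_{t+1}\|_\infty\le(1-\alpha)\|g_t\|_\infty$, and the bounds $\alpha\le f_1(j)\le\beta$ (hence $\|g_1\|_\infty\le\beta$, using $\beta\ge1\ge1-\alpha$, both of which follow from summing $\alpha\sigma_j\le P(i,j)\le\beta\sigma_j$ over $j$) handle the first step. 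All of this is sound.

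The one step that does not close as written is the final repackaging. Using only $\log\frac{1-\alpha/2}{1-\alpha}\ge\alpha/2$, the requirement $\log\beta\le(t-1)\log\frac{1-\alpha/2}{1-\alpha}+\log(1-\alpha/2)$ reduces, at the threshold $t=2+2\alpha^{-1}\log\beta$, to $0\le\alpha/2+\log(1-\alpha/2)$, which is \emph{false} for every $\alpha\in(0,1)$ because $\log(1-x)<-x$: the additive slack of $+1$ in the threshold contributes exactly $\alpha/2$ per your bound and therefore cannot absorb the $\log(1-\alpha/2)$ correction, contrary to what you assert. The repair is the marginally sharper elementary inequality $(1-\alpha/2)^2\ge1-\alpha$, equivalently $\frac{1-\alpha/2}{1-\alpha}\ge\frac{1}{1-\alpha/2}$, which yields
\[
(1-\alpha/2)\Big(\frac{1-\alpha/2}{1-\alpha}\Big)^{t-1}\;\ge\;(1-\alpha/2)^{-(t-2)}\;\ge\;e^{(t-2)\alpha/2}\;\ge\;\beta
\]
for all $t\ge2+2\alpha^{-1}\log\beta$ (the middle step uses $-\log(1-\alpha/2)\ge\alpha/2$ and $t\ge 2$); this is exactly the desired $(1-\alpha)^{t-1}\beta\le(1-\alpha/2)^t$, with the degenerate case $\alpha=1$ holding trivially. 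With this substitution your proof is complete.
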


Observe that all transition probabilities in \cref{lm:mixing} are non-zero.
In order to meet this assumption, we consider the Markov chain that is given by $(X_{\ell t})_{t\geq 0}$ for large enough $\ell$ where $(X_t)_{t\geq 0}$ is a simple random walk on a regular robust outexpander.

\begin{lm}\label{prop:markovrob}
	Let $0<\frac{1}{n}\ll \nu\leq \tau \ll \delta \leq 1$ and $\nu^{-1}+1 \leq k\leq  2\nu^{-1}$ and $c\in \mathbb{N}$.
	Let $D$ be a $\delta n$-regular robust $(\nu, \tau)$-outexpander on $[n]$. 
	Let $(X_t)_{t\geq 0}$ be the Markov chain corresponding to a simple random walk on $D$ and denote by $P$ its transition matrix. 
	Then, $(Y_t)_{t\geq 0}\coloneqq (X_{k t+c})_{t\geq 0}$ is a Markov chain with transition matrix $P^k$ and with the uniform distribution as unique stationary distribution $(\sigma_i)_{i\in [n]}$. Moreover,
	\[\nu^{k-1}\delta^{-k}\leq \frac{P^k(i,j)}{\sigma_\ell}\leq \delta^{-1}\]
	for any $i,j,\ell\in [n]$.
\end{lm}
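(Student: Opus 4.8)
The plan is to treat the three assertions in turn. For the Markov structure, I would invoke the standard fact that subsampling a time-homogeneous Markov chain at a fixed spacing $k$ produces a Markov chain whose one-step transition matrix is $P^k$: indeed, by the Markov property and time-homogeneity, $\bP[Y_{t+1}=j\mid Y_t=i]=\bP[X_{k(t+1)+c}=j\mid X_{kt+c}=i]=P^k(i,j)$, and the offset $c$ affects only the law of $Y_0=X_c$, not the transition probabilities. Since $D$ is $\delta n$-regular, $P(i,j)=(\delta n)^{-1}$ if $ij\in E(D)$ and $P(i,j)=0$ otherwise, so $(Y_t)_{t\geq 0}$ is a genuine Markov chain with transition matrix $P^k$.

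For the stationary distribution, I would check directly that the uniform vector $\sigma_i=n^{-1}$ satisfies $(\sigma P)_j=\sum_i \sigma_i P(i,j)=(\delta n\cdot n)^{-1}d_D^-(j)=n^{-1}=\sigma_j$, using that $D$ is regular and hence $d_D^-(j)=\delta n$ for every $j$. Thus $\sigma$ is stationary for $P$, and therefore for $P^k$ as well. Its uniqueness is inherited from the irreducibility and aperiodicity of the walk (recorded before the statement via \cref{lm:walkrob}), which guarantee a unique stationary distribution.

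The core of the lemma is the two-sided bound, for which the key identity is that in a $\delta n$-regular digraph every vertex is the start of exactly $(\delta n)^k$ equally likely walks of length $k$, so that $P^k(i,j)$ equals the number of $(i,j)$-walks of length $k$ divided by $(\delta n)^k$. For the upper bound I would write the number of $(i,j)$-walks of length $k$ as $\sum_{w\in N_D^-(j)}(\text{number of }(i,w)\text{-walks of length }k-1)$, which is at most the total number of length-$(k-1)$ walks from $i$, namely $(\delta n)^{k-1}$; dividing by $(\delta n)^k$ and by $\sigma_\ell=n^{-1}$ gives $P^k(i,j)/\sigma_\ell\leq\delta^{-1}$. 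For the lower bound I would apply \cref{lm:walkrob} with $k$ in the role of $\ell$; its hypotheses hold because $k\geq\nu^{-1}+1$ and $d_D^{\pm}(v)=\delta n\geq\tau n$ (as $\tau\leq\delta$), yielding at least $(\nu n)^{k-1}$ walks of length $k$ between the two vertices and hence $P^k(i,j)/\sigma_\ell\geq\nu^{k-1}\delta^{-k}$.

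The only delicate point, and the main obstacle, is that \cref{lm:walkrob} is stated for distinct endpoints while the bound is claimed for all $i,j\in[n]$, including the diagonal $i=j$. I would dispose of this by noting that the proof of \cref{lm:walkrob} never uses $u\neq v$: it counts walks $uu_1\cdots u_{\ell-1}v$ with $u_t\in S_t$, and the decisive estimate $|N_D^-(v)\cap S_{\ell-1}|\geq\nu n$ follows from $|S_{\ell-1}|\geq(1-\tau+\nu)n$ together with $d_D^-(v)\geq\tau n$ by inclusion--exclusion, independently of whether $v=u$. Hence the same lower bound $(\nu n)^{k-1}$ holds for closed walks of length $k$, which settles the case $i=j$ and completes the estimate.
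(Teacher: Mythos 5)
Your proof is correct and follows essentially the same route as the paper's: the lower bound comes from expanding $P^k(i,j)$ as a normalised count of length-$k$ walks and invoking \cref{lm:walkrob}, and the upper bound from the observation that the final step must enter $j$ from one of its $\delta n$ in-neighbours. Your extra remark about the diagonal case $i=j$ is a valid point that the paper glosses over --- \cref{lm:walkrob} is stated only for distinct $u,v$, but, as you correctly observe, its proof never uses this, so the same lower bound holds for closed walks.
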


\begin{proof}
	Note that $(\sigma_i)_{i\in [n]}=n^{-1}$
	is the unique stationary distribution for $(X_t)_{t\geq 0}$, and so for $(Y_t)_{t\geq 0}$ as well.	
	Let $i,j\in [n]$. By \cref{lm:walkrob},
	\begin{align*}
		P^k(i,j)
		&=\sum_{\ell_1, \dots, \ell_{k-1}\in [n]}P(i,\ell_1)P(\ell_1,\ell_2)\dots P(\ell_{k-1},j)\\
		&=\sum_{\substack{\ell_1, \dots, \ell_{k-1}\in [n]\colon \\ i\ell_1, \ell_1\ell_2, \dots, \ell_{k-1}j\in E(D)}}\frac{1}{d_D^+(i)d_D^+(\ell_1)\dots d_D^+(\ell_{k-1})}
		\geq \frac{(\nu n)^{k -1}}{\delta^k n^k} =\nu^{k-1}\delta^{-k} n^{-1}.
	\end{align*}
	Moreover,
	\begin{align*}
		P^k(i,j)&\NEW{=\mathbb{P}[X_{k+c}=j\mid X_c=i]}\\
		&=\mathbb{P}[X_{k+c-1}\in N_D^-(j)\NEW{\mid X_c=i}] \mathbb{P}[X_{k+c}=j\mid X_{k+c-1}\in N_D^-(j)\NEW{,X_c=i}]\\
		&\leq \sum_{\ell\in N_D^-(j)}\frac{\mathbb{P}[X_{k+c-1}=\ell\NEW{\mid X_c=i}]}{\delta n}\leq \NEW{1\cdot}\delta^{-1}n^{-1},
	\end{align*}
	which completes the proof.
\end{proof}

Due to the fact that simple random walks mix rapidly in expander graphs, the number of $(u,v)$-walks (and so $(u,v)$-paths) of length $\ell$ in robust outexpanders can be precisely estimated.
The next \lcnamecref{lm:countpaths}, which is the crucial tool for the proof of our main theorems, gives such an estimation.

For some technical reason, which becomes apparent in the proof of \cref{lm:ratio}, we only count $(u,v)$-paths which meet every edge of a fixed matching in at most one of its endpoints.
This has, however, no influence on the actual calculations made as this restriction is negligible.

\begin{lm}\label{lm:countpaths}
	Let $0<\frac{1}{n}\ll  \nu\leq \tau \ll \delta \leq 1$. 
	Fix an integer $\ell$ satisfying $\log^2 n\leq \ell \leq 2\log^2 n$.
	Let~$D$ be a $(\delta, n^{-\frac{1}{2}})$-almost regular robust $(\nu, \tau)$-outexpander on $n$ vertices. 
	Let~$M$ be a matching on $V(D)$. 
	Then, for any distinct $u,v\in V(D)$, there exist
	\[(1\pm n^{-\frac{1}{6}})\delta^\ell n^{\ell-1}\]
	$(u,v)$-paths $P$ of length $\ell$ such that $|V(P)\cap e|\leq 1$ for each $e\in M$.
\end{lm}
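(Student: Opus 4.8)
The plan is to reduce the problem to counting walks and then to control the discrepancy between walks and the restricted paths we actually want. Write $w_s(a,b)$ for the number of $(a,b)$-walks of length $s$ in $D$. First I would invoke \cref{lm:regrob} (with $f=n^{-\frac12}$) to pass to a spanning $d'$-regular subdigraph $D'\subseteq D$ that is a robust $(\frac{\nu}{2},\tau)$-outexpander, where $d'=(1-n^{-\frac14})\delta n$. The advantage of $D'$ is that it is genuinely regular, so a simple random walk $(X_t)$ on $D'$ has the uniform stationary distribution and every $(u,v)$-walk of length $\ell$ carries probability exactly $(d')^{-\ell}$; hence the number of such walks in $D'$ equals $(d')^\ell\,\bP[X_\ell=v\mid X_0=u]$.

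To evaluate this probability I would apply the mixing estimate. Let $P'$ be the transition matrix of the walk on $D'$, fix $k$ with $(\frac{\nu}{2})^{-1}+1\le k\le 2(\frac{\nu}{2})^{-1}$, and write $\ell=km+c$ with $0\le c<k$. By \cref{prop:markovrob} the chain $(X_{kt+c})_t$ has transition matrix $(P')^k$, uniform stationary distribution, and $\frac{(P')^k(i,j)}{\sigma_r}$ bounded between the constants $\alpha\coloneqq (\frac{\nu}{2})^{k-1}(\delta')^{-k}$ and $\beta\coloneqq(\delta')^{-1}$, where $\delta'=(1-n^{-\frac14})\delta$. Feeding these into \cref{lm:mixing} and summing over the distribution of $X_c$ gives $\bP[X_\ell=v\mid X_0=u]=(1\pm(1-\frac{\alpha}{2})^{m})n^{-1}$. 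Since $m\approx \ell/k\approx\log^2 n/k$ and $\alpha$ is a constant, the error $(1-\frac{\alpha}{2})^{m}$ decays super-polynomially, so it is $n^{-\omega(1)}\ll n^{-\frac16}$. Combining with $(d')^\ell=(1\pm n^{-\frac14}\ell)(\delta n)^\ell$ and $n^{-\frac14}\ell\ll n^{-\frac16}$, I obtain that $D'$ contains $(1\pm \tfrac12 n^{-\frac16})\delta^\ell n^{\ell-1}$ $(u,v)$-walks of length $\ell$.

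It then remains to show that each source of discrepancy is $o(n^{-\frac16})\delta^\ell n^{\ell-1}$. All are handled by the same elementary bound: summing over all endpoints shows $w_s(a,b)\le((1+n^{-\frac12})\delta n)^{s-1}$ for every $a,b$ and $s\ge1$, since the final step must land on the specified vertex $b$. For fixed positions $h<i$ with gap $g=i-h$, the number of $(u,v)$-walks with $w_h=w_i$ is at most $\max_x w_g(x,x)\cdot w_{\ell-g}(u,v)\le((1+n^{-\frac12})\delta n)^{\ell-2}\le 2\delta^{\ell-2}n^{\ell-2}$ (the case $g=\ell$ being void as $u\ne v$); summing over the $\le\ell^2$ position pairs bounds the non-path walks by $O(\ell^2 n^{-1})\delta^\ell n^{\ell-1}$. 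Likewise, forcing a walk through both endpoints of a fixed $e\in M$ saves three factors of $\delta n$, so after summing over positions and over the $\le n/2$ edges of $M$ (the boundary configurations with a forced endpoint occurring for only $O(1)$ edges) the matching-violating walks number $O(\ell^2 n^{-1})\delta^\ell n^{\ell-1}$. Finally, a walk in $D$ meeting $E(D)\setminus E(D')$ is counted by the position of its first outside edge; as $D\setminus D'$ has maximum degree $O(n^{-\frac14}\delta n)$, this contributes $O(\ell\, n^{-\frac14})\delta^\ell n^{\ell-1}$. Since $\ell\le2\log^2 n$, each of these is $\ll n^{-\frac16}\delta^\ell n^{\ell-1}$.

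Assembling the pieces yields the claim. For the lower bound, the valid $(u,v)$-paths in $D$ contain those in $D'$, which is the $D'$-walk count minus the non-path and matching-violating corrections, all within $(1-n^{-\frac16})\delta^\ell n^{\ell-1}$; for the upper bound, the valid paths are at most all $(u,v)$-walks in $D$, which is the $D'$-count plus the outside-edge correction, at most $(1+n^{-\frac16})\delta^\ell n^{\ell-1}$. I expect the main obstacle to be the mixing step: one must arrange that the error decays faster than any polynomial, which hinges on taking $\ell$ of order $\log^2 n$ (so that $\approx\log^2 n$ constant-gap contractions accumulate) while simultaneously keeping the regularity-defect factor $(1-n^{-\frac14})^\ell$ and the outside-edge count negligible. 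Balancing these competing polylogarithmic and polynomial errors against the target $n^{-\frac16}$ is the delicate part, whereas the combinatorial corrections are routine.
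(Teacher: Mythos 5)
Your proposal is correct and follows essentially the same route as the paper: pass to a regular spanning robust outexpander via \cref{lm:regrob}, count $(u,v)$-walks there by applying \cref{lm:mixing} to the $k$-step chain from \cref{prop:markovrob}, and then show that the non-path, matching-violating, and deleted-edge corrections are all negligible against $n^{-\frac{1}{6}}\delta^\ell n^{\ell-1}$. Your accounting of the corrections is somewhat more granular than the paper's (which bounds all offending walks inside $D'$ at once by $2\ell^2 n^{\ell-2}$), but the structure and all key estimates coincide.
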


\begin{proof}
	By \cref{lm:regrob}, we can remove from $D$ at most $2\delta n^{\frac{3}{4}}$ inedges and at most $2\delta n^{\frac{3}{4}}$ outedges incident to each vertex to obtain a spanning subdigraph $D'\subseteq D$ which is a robust $(\frac{\nu}{2}, \tau)$-outexpander and is $(1-n^{-\frac{1}{4}})\delta n$-regular.
	
	Let $u,v\in V(D)$ be distinct.  
	Note that $D'$ contains at most as many $(u,v)$-paths of length $\ell$ such that $|V(P)\cap e|\leq 1$ for each $e\in M$ as $D$.
	Moreover, the number of $(u,v)$-paths of length $\ell$ in $D$ which use at least one edge from $D\setminus D'$ is at most $\ell \cdot 2\delta n^{\frac{3}{4}}\cdot (1+n^{-\frac{1}{2}})^{\ell-2}\delta^{\ell-2}n^{\ell-2} \leq  \delta^{\ell} n^{\ell-\frac{6}{5}}$.
	Therefore, it is enough to show that~$D'$ contains 
	$(1\pm 2n^{-\frac{1}{5}}) \delta^\ell n^{\ell-1}$
	$(u,v)$-paths $P$ of length $\ell$ such that $|V(P)\cap e|\leq 1$ for each $e\in M$.
	
	First, 
	we claim that 
	$D'$ contains $(1\pm n^{-\frac{1}{5}}) \delta^\ell n^{\ell-1}$ $(u,v)$-walks of length~$\ell$.
	Note that $D'$ contains exactly $(1- n^{-\frac{1}{4}})^\ell\delta^\ell n^\ell$ walks of length $\ell$ starting at~$u$.
	Let $k\coloneqq \lceil 2\nu^{-1}\rceil+1$ and let $(X_t)_{t\geq 0}$ be the Markov chain corresponding to a simple random walk on $D'$ starting at $u$ and define $(Y_t)_{t\geq 0}\coloneqq (X_{\ell-k\lfloor\frac{\ell}{k}\rfloor+kt})_{t\geq 0}$. Note that
	$\mathbb{P}[X_\ell=v]=\mathbb{P}[Y_{\lfloor\frac{\ell}{k}\rfloor}=v].$
	Then, \cref{lm:mixing,prop:markovrob}%
		\COMMENT{$\alpha\geq \frac{\nu^{k-1}\delta^{-k}}{2^{k-1}}$ and $\beta \leq \delta^{-1}$, so $2+2\alpha^{-1}\log \beta \leq 2+2^k\nu^{-k+1}\delta^k\log(\delta^{-1})\leq \log n\leq \lfloor\frac{\ell}{k}\rfloor$.}%
		\COMMENT{(with $(Y_t)_{t\geq 0}$ and $\lfloor\frac{\ell}{k}\rfloor$ playing the roles of $(X_t)_{t\geq 0}$ and $t$ in \cref{lm:mixing} and with $\ell-k\lfloor\frac{\ell}{k}\rfloor$ and $\frac{\nu}{2}$ playing the roles of $c$ and $\nu$ in \cref{prop:markovrob})}
	(applied with $\frac{\nu}{2}$ playing the role of $\nu$)
	imply that the number of $(u,v)$-walks of length $\ell$ in $D'$ is equal to%
		\COMMENT{For the last equality, note that
		\[1-\frac{n^{-\frac{1}{5}}}{3}\leq 1-2\ell n^{-\frac{1}{4}}\leq (1-n^{-\frac{1}{4}})^\ell\leq 1\]
		and
		\[\Big(1-\frac{\nu^{k-1}\delta^{-k}}{2^k}\Big)^{\lfloor\frac{\ell}{k}\rfloor}\leq \Big(1-\frac{\nu^{k-1}\delta^{-k}}{2^k}\Big)^{\frac{\nu\log^2 n}{3}}\leq \frac{n^{-\frac{1}{5}}}{3},\]
		where the last inequality holds since $\frac{\nu}{3}\log^2 n\log(1-\frac{\nu^{k-1}\delta^{-k}}{2^k})\leq -\frac{1}{5}\log n-\log 3$.
	 	Thus, \[\Big(1\pm\Big(1-\frac{\nu^{k-1}\delta^{-k}}{2^k}\Big)^{\lfloor\frac{\ell}{k}\rfloor}\Big)(1- n^{-\frac{1}{4}})^\ell
	 	=\Big(1\pm \frac{n^{-\frac{1}{5}}}{3}\Big)^2=(1\pm n^{-\frac{1}{5}}).\]}
	\begin{align*}
		\mathbb{P}[Y_{\lfloor\frac{\ell}{k}\rfloor}=v]\cdot (1- n^{-\frac{1}{4}})^\ell \delta^\ell n^\ell&= \Big(1\pm \Big(1-\frac{\nu^{k-1}\delta^{-k}}{2^k}\Big)^{\lfloor\frac{\ell}{k}\rfloor}\Big)n^{-1}\cdot (1- n^{-\frac{1}{4}})^\ell\delta^\ell n^\ell\\
		&= (1\pm n^{-\frac{1}{5}}) \delta^\ell n^{\ell-1},
	\end{align*}
	 as desired.

	The number of $(u,v)$-walks which are not paths $P$ such that $|V(P)\cap e|\leq 1$ for each $e\in M$ is at most $2\ell^2 n^{\ell-2}	\leq n^{\ell-\frac{3}{2}}$.
	This implies that $D'$ contains $(1\pm 2n^{-\frac{1}{5}}) \delta^\ell n^{\ell-1}$
	$(u,v)$-paths $P$ of length~$\ell$ such that $|V(P)\cap e|\leq 1$ for each $e\in M$, as desired. 
\end{proof}

\subsection{Ratio estimates}

In this section, 
we prove our main tool (\cref{lm:ratio}).
Note that we only need the case where $N$ is a matching to prove \cref{thm:edge,thm:main}. 
The case where $N$ is a regular spanning subgraph can be proved with the same arguments, so we state (and prove) the next \lcnamecref{lm:ratio} for both cases. (See \cref{sec:conclusion} for some applications of the regular case.)

\begin{lm}\label{lm:ratio}
	Let $0<\frac{1}{n} \ll \nu\leq \tau\ll \delta \leq 1$ and let $r\leq n^{\frac{1}{3}}$ be a positive integer.
	Let $G$ be a $\delta n$-regular robust $(\nu, \tau)$-expander on $n$ vertices and suppose that $N$ is a matching\footnote{We sometimes treat a matching as a graph and hence $e(N)$ refers to the number of edges in $N$.} in $G$ or a spanning $r$-regular subgraph of $G$. 
	For any non-negative integer $k$, let $\cM_k$ be the set of perfect matchings in $G$ which contain precisely $k$ edges of $N$. 
	Then, for each $k\in [\min\{e(N), n^{\frac{1}{3}}\}]$, we have
	\begin{align*}
	\frac{|\cM_k|}{|\cM_{k-1}|}= (1\pm n^{-\frac{1}{7}}) \frac{f(N,k)}{k\delta n}, \qquad \text{where }
	f(N,k)\coloneqq
	\begin{cases}
		 e(N)-(k-1)& \text{if $N$ is a matching};\\
		e(N) & \text{otherwise.}
	\end{cases}
	\end{align*}
\end{lm}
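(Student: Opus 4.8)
The plan is to carry out the switching method sketched in the introduction and to reduce every degree computation in the auxiliary bipartite graph to an application of \cref{lm:countpaths}. Fix an integer $\ell$ with $\log^2 n\le \ell-1\le 2\log^2 n$ and let $H$ be the bipartite graph on $\cM_k\cup\cM_{k-1}$ whose edges are the pairs $(M,M')$ with $M'=M\mathbin{\triangle}C$ for some cycle $C$ of length $2\ell$ that alternates between $M$ and $M'$ and meets $N$ in exactly one edge. Since $C$ carries exactly one $N$-edge, switching along $C$ removes one edge of $N$ and adds none, so indeed $M\in\cM_k$ and $M'\in\cM_{k-1}$ and $H$ is well defined. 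I would then show that the $H$-degrees are essentially constant on each side and conclude by double counting, using
\[
\frac{|\cM_k|}{|\cM_{k-1}|}=\frac{\text{(degree of a vertex of }\cM_{k-1})}{\text{(degree of a vertex of }\cM_k)}.
\]

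To compute the degree of a fixed $M\in\cM_k$, I would encode $M$-alternating walks by a digraph. Writing $M(v)$ for the $M$-partner of $v$, let $D_M$ be the digraph on $V(G)$ with an arc $u\to v$ whenever $M(v)$ is a non-$M$-neighbour of $u$ in $G$, so that each arc records a non-$M$ step followed by an $M$ step. As $v\mapsto M(v)$ is an involution, the in-neighbours of $v$ in $D_M$ are exactly the non-$M$-neighbours of $M(v)$, whence $RN^+_{\nu,D_M}(S)=M(RN_{\nu,G}(S))$ up to the single excluded $M$-edge at each vertex; since $M$ is a bijection and $G$ is a robust $(\nu,\tau)$-expander, $D_M$ is a $(\delta,n^{-1/2})$-almost regular robust $(\tfrac{\nu}{2},\tau)$-outexpander. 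A cycle of length $2\ell$ through a fixed edge $e=xy\in N\cap M$ corresponds, after deleting $e$, to an $M$-alternating $(y,x)$-path of length $2\ell-1$, i.e.\ to a walk of length $\ell-1$ in $D_M$ from $y$ to some $p\in N_G(x)\setminus\{y\}$ followed by one free non-$M$ edge $px$. The condition that this $D_M$-walk meets every $M$-edge in at most one vertex is precisely the condition that the underlying $G$-path has no repeated vertices, so \cref{lm:countpaths} (applied to $D_M$ with matching $M$ and $\tfrac{\nu}{2}$ in place of $\nu$) counts exactly the admissible walks: for each of the $\delta n-1$ choices of $p$ there are $(1\pm n^{-1/6})\delta^{\ell-1}n^{\ell-2}$ of them. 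Summing over $p$ and over the $k$ choices of $e$ gives
\[
d_H(M)=(1\pm n^{-1/6})\,k\,\delta^\ell n^{\ell-1}.
\]

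The degree of a fixed $M'\in\cM_{k-1}$ is computed in the same way, now switching an edge of $N$ \emph{in}. For each of the $e(N)-(k-1)$ edges $e=xy\in N\setminus M'$, deleting $e$ from a candidate cycle of length $2\ell$ leaves an $M'$-alternating $(y,x)$-path of length $2\ell-1$ that \emph{begins and ends} with an $M'$-edge; in the associated digraph $D_{M'}$ this is a walk of length $\ell-1$ from $y$ whose final free edge is an $M'$-edge, so its endpoint is forced to equal $M'(x)$. Hence there is no free factor $\delta n$ on this side, and \cref{lm:countpaths} yields $(1\pm n^{-1/6})\delta^{\ell-1}n^{\ell-2}$ cycles per edge $e$, giving
\[
d_H(M')=(1\pm n^{-1/6})\,(e(N)-(k-1))\,\delta^{\ell-1}n^{\ell-2}.
\]
This single missing factor $\delta n$ between the two sides is exactly what produces the denominator $k\delta n$ in the statement. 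Double counting $e(H)=\sum_{M\in\cM_k}d_H(M)=\sum_{M'\in\cM_{k-1}}d_H(M')$ and dividing then gives the claimed $(1\pm n^{-1/7})\frac{e(N)-(k-1)}{k\delta n}$; in the $r$-regular case the same argument applies verbatim with $e(N)-(k-1)=(1\pm n^{-1/7})e(N)$, using $k\le n^{1/3}\ll e(N)$.

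I expect the main obstacle to be the bookkeeping that turns ``alternating cycle with exactly one $N$-edge'' into a clean path count, in two respects. First, one must verify the involution identity for $RN^+$ so that $D_M$ and $D_{M'}$ inherit robust out-expansion from $G$, and then handle the two endpoints correctly: the whole result hinges on the asymmetry between the free non-$M$ edge (a factor $\delta n$) for $M\in\cM_k$ and the forced $M'$-edge (no such factor) for $M'\in\cM_{k-1}$. Second, one must show that requiring the cycle to avoid all \emph{other} edges of $N$ deletes only a $o(1)$-fraction of the walks counted by \cref{lm:countpaths}: an $M$-edge of the path lies in $N$ for at most $2k$ choices of its free vertex, and a non-$M$ edge lies in $N$ for at most one choice, so over the $O(\ell)$ positions the loss is $O(k\ell/n)=o(1)$ relative to the main term. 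These contributions, together with the $n^{-1/6}$ from \cref{lm:countpaths}, are absorbed into the final error $n^{-1/7}$.
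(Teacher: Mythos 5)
Your proposal is correct and follows essentially the same route as the paper: the identical switching graph on $\cM_k\cup\cM_{k-1}$ via alternating $2\ell$-cycles containing exactly one $N$-edge, the same encoding of alternating paths as length-$(\ell-1)$ walks in an auxiliary digraph that inherits robust outexpansion and almost-regularity from $G$, an application of \cref{lm:countpaths} to both degree computations (with the free endpoint contributing the extra factor $\delta n$ on the $\cM_k$ side), and the same double count. The only difference is cosmetic bookkeeping: the paper builds the digraph on $V(G)\setminus V(E(N)\cap M')$ with the edges of $N$ deleted so that forbidden configurations never arise, whereas you keep the full vertex set and subtract the $o(1)$-fraction of walks meeting another $N$-edge afterwards.
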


\begin{proof}
	Let $\ell\coloneqq \lceil\log^2 n\rceil+1$.
	Let $H$ be the auxiliary bipartite graph on vertex classes $\cM_k$ and $\cM_{k-1}$ defined as follows. Perfect matchings $M\in \cM_k$ and $M'\in \cM_{k-1}$ are adjacent in $H$ if and only if there exists $e\in E(N)\cap M$ such that $M\triangle M'$ is a cycle of length $2\ell$ whose intersection with $E(N)$ consists of the edge $e$.
	
	\begin{claim}\label{claim:ratiok-1}
		Each $\NEW{M'} \in \cM_{k-1}$ satisfies
		\[d_H(\NEW{M'})=(1\pm 3n^{-\frac{1}{6}})\delta^{\ell-1}n^{\ell-2}f(N,k).\]
	\end{claim}

	\begin{proofclaim}
		Let $\NEW{M'}\in \cM_{k-1}$ and $e=uv\in E(N)\setminus \NEW{M'}$ such that $u,v\notin V(E(N)\cap \NEW{M'})$. Denote by $w$ the (unique) neighbour of $u$ in $\NEW{M'}$. Then, note that the number of cycles $C$ of length $2\ell$ which alternate between edges of $\NEW{M'}$ and edges of $E(G)\setminus \NEW{M'}$ and which satisfy $E(C)\cap E(N)=\{e\}$ is precisely the number of $(w,v)$-paths of length $2\ell-2$ which alternate between edges of $E(G)\setminus (E(N)\cup \NEW{M'})$ and edges of $\NEW{M'}\setminus E(N)$.
		The number of such paths can be estimated using \cref{lm:countpaths} as follows.
		
		\NEW{Proceed as follows to} define an auxiliary digraph $D$ on $V(G)\setminus V(E(N)\cap \NEW{M'})$ \NEW{whose paths of length $\ell-1$ will roughly correspond to alternating paths of length $2\ell-2$ in $G$}. Denote $m\coloneqq |V(D)|=n-2(k-1)$ and, for any $x\in V(D)$, let $N_D^+(x)\coloneqq N_\NEW{M'}(N_{G-N-\NEW{M'}}(x)\cap V(D))$, where $G-N-\NEW{M'}$ refers to the graph obtained from $G$ by deleting $E(N)$ and $\NEW{M'}$ from $E(G)$. 
		Thus, each $x\in V(D)$ satisfies
		\begin{align*}
			d_D^+(x)&= |N_{G-N-\NEW{M'}}(x)\cap V(D)|=d_G(x)\pm (2k+r)
			=(1 \pm m^{-\frac{1}{2}})\delta m.
		\end{align*}
		Similarly, note that each $x\in V(D)$ satisfies $N_D^-(x)= N_{G-N-\NEW{M'}}(N_\NEW{M'}(x))\cap V(D)$ and so $d_D^-(x)=(1 \pm m^{-\frac{1}{2}})\delta m$. That is, $D$ is $(\delta, m^{-\frac{1}{2}})$-almost regular.
		Moreover, \NEW{we claim that $D$ is a robust $(\frac{\nu}{2},\NEW{2}\tau)$-outexpander. Indeed, let $S\subseteq V(D)$ satisfy $\tau n\leq 2\tau m\leq |S|\leq (1-2\tau)m\leq (1-\tau)n$. Then, for any $xy\in M'$ with $x\in RN_{\nu,G}(S)$ and $y\in V(D)$, the definition of $D$ implies that $|N_D^-(y)\cap S|=|N_{G-N-M'}(x)\cap S|\geq \nu n-(r+1)\geq \frac{\nu m}{2}$ and so $y\in RN_{\frac{\nu}{2},D}^+(S)$. That is,
		\begin{align*}
			|RN_{\frac{\nu}{2},D}^+(S)|\geq |N_\NEW{M'}(RN_{\nu,G}(S))\cap V(D)|\geq |S|+\nu n-(n-m)\geq |S|+\frac{\nu m}{2}
		\end{align*}}
		and so $D$ is a robust $(\frac{\nu}{2},\NEW{2}\tau)$-outexpander, \NEW{as claimed}.
		Thus, \cref{lm:countpaths} (applied with $m, \ell-1, \NEW{M'}\setminus E(N),\frac{\nu}{2}$, and \NEW{$2\tau$} playing the roles of $n, \ell, M, \nu$, and \NEW{$\tau$}) implies that $D$ contains%
			\COMMENT{Note that $m=(1\pm 2n^{-\frac{2}{3}})n$, so 
			\[m^{\ell-2}=\left(1\pm 2n^{-\frac{2}{3}}\right)^{\ell-2}n^{\ell-2}=\left(1\pm 4(\ell-2)n^{-\frac{2}{3}}\right)n^{\ell-2}=\left(1\pm n^{-\frac{1}{3}}\right) n^{\ell-2}.\]
			Moreover,
			\[m^{-\frac{1}{6}}\leq (n-2 n^{-\frac{2}{3}})^{-\frac{1}{6}}\leq \frac{3n^{-\frac{1}{6}}}{2} .\]
			Therefore,
			\[(1\pm m^{-\frac{1}{6}}) m^{\ell-2}= \left(1\pm \frac{3n^{-\frac{1}{6}}}{2}\right)\left(1\pm n^{-\frac{1}{3}}\right) n^{\ell-2}=\left(1\pm 2n^{-\frac{1}{6}}\right).\]}
		\[(1\pm m^{-\frac{1}{6}}) \delta^{\ell-1} m^{\ell-2}=(1\pm 2n^{-\frac{1}{6}}) \delta^{\ell-1}n^{\ell-2}\]
		$(w,v)$-paths $P$ of length $\ell-1$ such that $|V(P)\cap e'|\leq 1$ for each $e'\in \NEW{M'}$.
		By construction, there is a bijection between the $(w,v)$-paths of length $2\ell-2$ in $G$ which alternate between edges of $E(G)\setminus (E(N)\cup \NEW{M'})$ and edges of $\NEW{M'}\setminus E(N)$, and the $(w,v)$-paths $P$ of length $\ell-1$ in $D$ which satisfy $|V(P)\cap e'|\leq 1$ for each $e'\in \NEW{M'}\setminus E(N)$.
		
		If $N$ is a matching, then there are exactly $e(N)-(k-1)=f(N,k)$ edges $e=uv\in E(N)\setminus \NEW{M'}$ such that $u,v\notin V(E(N)\cap \NEW{M'})$, so the claim holds. If $N$ is a spanning $r$-regular subgraph of $G$, then there are at most $e(N)$ such edges and at least $e(N)-2rk$ such edges, so
		\begin{align*}
			d_H(\NEW{M'})= (1\pm 2n^{-\frac{1}{6}})\delta^{\ell-1}n^{\ell-2}\cdot (1\pm 4kn^{-1})e(N)=(1\pm 3n^{-\frac{1}{6}})\delta^{\ell-1}n^{\ell-2} f(N,k),
		\end{align*}
		as desired.
	\end{proofclaim}

	\begin{claim}\label{claim:ratiok}
		Each $M\in \cM_k$ satisfies
		\[d_H(M)= (1\pm 3n^{-\frac{1}{6}}) k\delta^\ell n^{\ell-1}.\]
	\end{claim}

	\begin{proofclaim}
		Let $M\in \cM_k$ and $e=uv\in E(N)\cap M$. Then, for any $w\in N_G(v)\setminus V(E(N)\cap M)$, one can show using the same arguments as in \cref{claim:ratiok-1}
		that $G$ contains
		$(1\pm 2n^{-\frac{1}{6}}) \delta^{\ell-1}n^{\ell-2}$
		$(u,w)$-paths of length $2\ell-2$ which alternate between edges of $E(G)\setminus (E(N)\cup M)$ and $M\setminus E(N)$. Since $|E(N)\cap M|=k$ and $\delta n-2k\leq |N_G(v)\setminus V(E(N)\cap M)|\leq \delta n$, we have%
			\COMMENT{Note that \[|N_G(v)\setminus V(N\cap M)|=\big(1\pm \frac{2k}{\delta n}\Big)\delta n=\Big(1\pm \frac{2n^{-\frac{2}{3}}}{\delta}\Big)\delta n=(1\pm n^{-\frac{1}{3}})\delta n.\]
			Moreover,
			\[(1\pm 2n^{-\frac{1}{6}})(1\pm n^{-\frac{1}{3}})=(1\pm 3n^{-\frac{1}{6}}).\]}
		\[d_H(M)= (1\pm 2n^{-\frac{1}{6}})\delta^{\ell-1} n^{\ell-2}\cdot k\cdot (1\pm 2k\delta^{-1}n^{-1})\delta n=(1\pm 3n^{-\frac{1}{6}})k\delta^\ell n^{\ell-1},\]
		as desired.
	\end{proofclaim}
 	
 	Since $\sum_{M\in \cM_k}d_H(M)=e(H)=\sum_{M\in \cM_{k-1}}d_H(M)$, \cref{claim:ratiok,claim:ratiok-1} imply that
 	\begin{align*}
 		\frac{|\cM_k|}{|\cM_{k-1}|}&=\frac{1\pm 3n^{-\frac{1}{6}}}{1\pm 3n^{-\frac{1}{6}}}\cdot  \frac{f(N,k)}{k\delta n}
 		=(1\pm n^{-\frac{1}{7}}) \frac{f(N,k)}{k\delta n}
 	\end{align*}
 	and so the \lcnamecref{lm:ratio} follows.	
\end{proof}

\subsection{Proof of Theorems \ref{thm:edge} and \ref{thm:main}}

We first derive \cref{thm:edge} and then \cref{thm:main}, where we also use~\cref{thm:edge}.
Observe that both proofs are very elementary given Lemma~\ref{lm:ratio}.
The (short) proof of \cref{thm:main} is similar to the analogue proof in~\cite{spiro2022counting}.

\begin{proof}[Proof of \cref{thm:edge}]
	Let $\cM_0$ be the set of perfect matchings in $G$ which do not contain~$e$ and~$\cM_1$ be the set of perfect matchings of $G$ which contain $e$.
	Then, \cref{lm:ratio} (applied with $\{e\}, \frac{d}{n}$, and $1$ playing the roles of $N,\delta$, and $k$) gives that
	\[\mathbb{P}[e\in M]=\frac{|\cM_1|}{|\cM_0|+|\cM_1|}=\frac{1\pm n^{-\frac{1}{7}}}{1+(1\pm n^{-\frac{1}{7}})d^{-1}}d^{-1}=(1\pm n^{-\frac{1}{8}})d^{-1},\]
	as desired.
\end{proof}

\begin{proof}[Proof of \cref{thm:main}]
	\NEW{Denote $\lambda\coloneqq d^{-1}|N|$ and let $Y\sim \Po(\lambda)$.
	For any non-negative integer $k$, let $p_k\coloneqq \bP[X=k]$ and define $\omega\coloneqq n^{\frac{1}{28}}$.
	Note that if $|N|<\omega^5$, then
	the union bound and \cref{thm:edge} imply that
	\[p_0\geq 1-\sum_{e\in N}\mathbb{P}[e\in M]\geq 1-(1+o_n(1))d^{-1}|N|\geq 1-\frac{\omega^6}{n}\geq 1-\omega^{-2}\]
	and, in particular, $p_k\leq \omega^{-2}$ for all positive integers $k$. On the other hand, $e^{-\lambda}\geq 1-\omega^{-1}$, while for all $k\in [\omega]$, we have $\frac{\lambda^k}{k!}e^{-\lambda}\leq \lambda \leq \omega^{-2}$ and, by Markov's inequality, $\mathbb{P}[Y\geq \omega]\leq \lambda\omega^{-1}\leq \omega^{-1}$.
	Therefore,
	\begin{align*}
		d_{\rm TV}(X,Y)&= \frac{1}{2}\sum_{k\geq 0}\left|p_k- \frac{\lambda^k}{k!}e^{-\lambda}\right|
		=\frac{1}{2}\left(|p_0-e^{-\lambda}|+\sum_{k=1}^\omega\left|p_k- \frac{\lambda^k}{k!}e^{-\lambda}\right|+\sum_{k>\omega}\left|p_k- \frac{\lambda^k}{k!}e^{-\lambda}\right|\right)\\
		&\leq\frac{1}{2}(\omega^{-2}+\omega\cdot \omega^{-2}+(\omega^{-2}+\omega^{-1}))\leq 3\omega^{-1}.
	\end{align*}
	We may therefore assume that $|N|\geq \omega^{5}$.}
	Observe that $\sum_{k>\omega}\frac{\lambda^k}{k!}\leq e^\lambda \omega^{-1}$%
		\COMMENT{$\sum_{k\geq \omega}\frac{\lambda^k}{k!}\leq\sum_{k\geq \omega}(\lambda e \omega^{-1})^k\leq  \sum_{k\geq \omega}(e \omega^{-1})^k =\frac{(e\omega^{-1})^\omega}{1-e\omega^{-1}}\leq e^\lambda \omega^{-1}$.}.
	Moreover, \cref{thm:edge} and linearity of expectation yield $\mathbb{E}[X]=(1+o_n(1))d^{-1}|N|$ and 
	so Markov's inequality implies that $\mathbb{P}[X> \omega] \leq  \omega^{-\frac{1}{2}}$.
	\NEW{For $k\in [\omega]$,} \cref{lm:ratio} implies that $\frac{p_k}{p_{k-1}}= (1\pm \omega^{-3})\frac{\lambda}{k}$ and so $p_k= (1\pm \omega^{-1})\frac{\lambda^k}{k!}p_0$.
	Note that 
	\begin{align*}
		1
		=\sum_{k\geq 0}p_k
		= (1 \pm 2\omega^{- \frac{1}{2}}) \sum_{k=0}^\omega p_k
		= (1 \pm 3\omega^{-\frac{1}{2}}) p_0\sum_{k=0}^\omega\frac{\lambda^k}{k!}
		= (1 \pm 4\omega^{- \frac{1}{2}}) p_0 e^{\lambda}.
	\end{align*}
	Hence $p_0= (1 \pm 5 \omega^{-\frac{1}{2}}) e^{-\lambda}$ and so $p_k=(1 \pm 6 \omega^{-\frac{1}{2}})\frac{\lambda^k}{k!}e^{-\lambda}$ for each $k\leq \omega$.
	This implies that $|p_k-\frac{\lambda^k}{k!}e^{-\lambda}|\leq 6 \omega^{-\frac{1}{2}}\frac{\lambda^k}{k!}e^{-\lambda}$ for each $k\leq \omega$.

	By Markov's inequality, we conclude that $\bP[X\geq \omega]\leq \omega^{- \frac{1}{2}}$ and $\bP[Y\geq \omega]\leq \omega^{- \frac{1}{2}}$.
	Therefore,
	\begin{align*}
		d_{\rm TV}(X, Y)
		= \frac{1}{2}\sum_{k\geq 0}\left|p_k- \frac{\lambda^k}{k!}e^{-\lambda}\right|
		\leq 3\omega^{- \frac{1}{2}}\sum_{k= 0}^\omega \frac{\lambda^k}{k!}e^{-\lambda} + \omega^{-\frac{1}{2}}
		\leq\omega^{- \frac{1}{3}},
	\end{align*}
	as desired.
\end{proof}

\section{Conclusion}\label{sec:conclusion}

In this paper we showed that uniformly chosen perfect matchings in robust expanders contain each edge asymptotically equally likely.
In fact, for a larger set of disjoint edges, these events are approximately independent.
As robust expanders are a fairly large class of graphs,
this in particular contains graphs $G$ on $n$ vertices with $\delta(G)\geq (\frac{1}{2}+o_n(1))n$, 
which confirms a question of Spiro and Surya~\cite{spiro2022counting} in a strong form.

\subsection{Regular subgraphs}

Spiro and Surya~\cite{spiro2022counting} also suggest to estimate the probability that a uniformly chosen perfect matching of Tur\'an graphs intersects a fixed spanning $r$-regular subgraph.
We prepared \cref{lm:ratio} for this question and hence we also have the following result.

\begin{thm}\label{thm:reg}
	For any $\delta> 0$, there exists $\tau >0$ such that for all $\nu>0$, there exists $n_0\in \mathbb{N}$ for which the following holds. 
	Let $n\geq n_0$ be even, $d\geq \delta n$, and let $r\leq n^{\frac{1}{50}}$ be a positive integer. 
	Then, for any $d$-regular robust $(\nu, \tau)$-expander $G$ on $n$ vertices, $M\sim U(\cP(G))$, any spanning $r$-regular subgraph~$N$ in $G$, $X\coloneqq |M\cap E(N)|$, and $Y\sim \Po(\frac{rn}{2d})$, we have 
	$d_{\rm TV}(X,Y)=o_n(1)$. 
\end{thm}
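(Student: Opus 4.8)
The plan is to mirror the proof of \cref{thm:main}, replacing the matching by the spanning $r$-regular subgraph $N$ and invoking the regular case of \cref{lm:ratio}, in which $f(N,k)=e(N)=\frac{rn}{2}$. Write $\lambda\coloneqq \frac{rn}{2d}=d^{-1}e(N)$ and $Y\sim\Po(\lambda)$, and for each non-negative integer $k$ set $p_k\coloneqq \bP[X=k]$. Two features make this case cleaner than \cref{thm:main}. First, since $N$ is spanning and $r$-regular we always have $e(N)=\frac{rn}{2}\geq \frac{n}{2}$, so $N$ is large for free and the entire small-$N$ case from the proof of \cref{thm:main} can be dropped. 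Second, because $f(N,k)=e(N)$ is \emph{constant} in $k$, \cref{lm:ratio} (applied with $\frac{d}{n}$ playing the role of $\delta$) yields the exact Poisson recursion $\frac{p_k}{p_{k-1}}=(1\pm n^{-\frac17})\frac{\lambda}{k}$, with no need to approximate $e(N)-(k-1)$ by $e(N)$.

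First I would fix a threshold $\omega$ satisfying $\lambda\ll\omega\ll n^{\frac17}$; the hypothesis $r\leq n^{\frac{1}{50}}$ forces $\lambda\leq \frac{n^{1/50}}{2\delta}$, so a choice such as $\omega\coloneqq n^{\frac{1}{14}}$ works. I would then check that \cref{lm:ratio} applies for every $k\in[\omega]$: indeed $r\leq n^{\frac{1}{50}}\leq n^{\frac13}$, while $e(N)=\frac{rn}{2}\geq n^{\frac13}$ gives $\min\{e(N),n^{\frac13}\}=n^{\frac13}$ and $\omega\leq n^{\frac13}$. Next, \cref{thm:edge} together with linearity of expectation gives $\bE[X]=(1+o_n(1))\lambda$, whence Markov's inequality yields $\bP[X\geq\omega]=o_n(1)$, and similarly $\bP[Y\geq\omega]\leq\lambda\omega^{-1}=o_n(1)$.

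With these in hand I would iterate the recursion: for $k\in[\omega]$ one gets $p_k=(1\pm n^{-1/7})^k\frac{\lambda^k}{k!}p_0=(1\pm o_n(1))\frac{\lambda^k}{k!}p_0$, where the accumulated multiplicative error stays $o_n(1)$ precisely because $\omega n^{-1/7}=o_n(1)$. Summing over $k\leq\omega$, using $\sum_{k\geq 0}\frac{\lambda^k}{k!}=e^{\lambda}$ and the tail bound $\sum_{k>\omega}\frac{\lambda^k}{k!}=o(e^{\lambda})$ (valid since $\omega\gg\lambda$), together with $\sum_{k\leq\omega}p_k=1-o_n(1)$, I would deduce $p_0=(1\pm o_n(1))e^{-\lambda}$ and hence $\big|p_k-\frac{\lambda^k}{k!}e^{-\lambda}\big|=o_n(1)\cdot\frac{\lambda^k}{k!}e^{-\lambda}$ for every $k\leq\omega$. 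Splitting $d_{\rm TV}(X,Y)=\frac12\sum_{k\geq 0}\big|p_k-\frac{\lambda^k}{k!}e^{-\lambda}\big|$ into the ranges $k\leq\omega$ and $k>\omega$, bounding the first by the displayed estimate and the second by $\bP[X\geq\omega]+\bP[Y\geq\omega]$, then gives $d_{\rm TV}(X,Y)=o_n(1)$.

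The only genuinely new point compared with \cref{thm:main} is the balancing of the threshold $\omega$: it must dominate $\lambda$ (so the Markov tail bounds are $o_n(1)$) while remaining well below $n^{\frac17}$ (so that the per-step error $n^{-1/7}$ of \cref{lm:ratio}, compounded over $\omega$ steps, stays negligible). This double constraint is exactly what the restriction $r\leq n^{\frac{1}{50}}$ guarantees, and verifying it is the main thing to do; the remainder is a routine transcription of the large-$N$ part of the proof of \cref{thm:main}.
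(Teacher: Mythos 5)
Your proposal is correct and follows essentially the same route as the paper: both transcribe the large-$N$ part of the proof of \cref{thm:main}, using the regular case of \cref{lm:ratio} with $f(N,k)=e(N)$ to get the ratio $(1\pm o_n(1))\frac{\lambda}{k}$, telescoping, normalising via $\sum_k p_k=1$, and handling the tails by Markov's inequality. The only difference is cosmetic: the paper takes the cutoff $\omega\lambda$ with $\omega=n^{1/45}$ where you take $n^{1/14}$, and both choices satisfy the same double constraint ($\gg\lambda$ but with accumulated error $\ll n^{1/7}$) that you correctly identify as the one point requiring the hypothesis $r\leq n^{1/50}$.
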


\begin{proof}
	Let $\lambda\coloneqq \frac{rn}{2d}$ and $\omega\coloneqq n^{\frac{1}{45}}$.
	The proof is almost the very same as the proof of \cref{thm:main} with $\omega$ is replaced by $\omega\lambda$ in most places.
	
	Markov's inequality and \cref{thm:edge} imply $\mathbb{P}[X> \omega\lambda] \leq  (\omega\lambda)^{-\frac{1}{2}}$.
	For all non-negative integers, define $p_k\coloneqq \bP[X=k]$. For \NEW{$k\in  [\omega\lambda]$}, we conclude by
	\cref{lm:ratio} that $\frac{p_k}{p_{k-1}}= (1\pm (\omega\lambda)^{-3})\frac{\lambda}{k}$ and so $p_k= (1\pm (\omega\lambda)^{-1})\frac{\lambda^k}{k!}p_0$.
	Similarly as above, we obtain $p_0= (1 \pm 5 (\omega\lambda)^{-\frac{1}{2}}) e^{-\lambda}$ and thus $p_k= (1 \pm 6 (\omega\lambda)^{-\frac{1}{2}})\frac{\lambda^k}{k!}e^{-\lambda}$ for each $k\leq \omega \lambda$.
	This implies that $|p_k-\frac{\lambda^k}{k!}e^{-\lambda}|\leq 6 (\omega\lambda)^{-\frac{1}{2}}\frac{\lambda^k}{k!}e^{-\lambda}$ for each $k\leq \omega \lambda$.
	
	By Markov's inequality, we conclude that $\bP[X> \omega\lambda]\leq \omega^{- \frac{1}{2}}$ and $\bP[Y> \omega\lambda]\leq \omega^{- \frac{1}{2}}$.
	Therefore,
	\begin{align*}
		d_{\rm TV}(X, Y)
		= \frac{1}{2}\sum_{k\geq 0}\left|p_k- \frac{\lambda^k}{k!}e^{-\lambda}\right|
		= 3 (\omega\lambda)^{-\frac{1}{2}}\sum_{k= 0}^{\omega\lambda}\frac{\lambda^k}{k!}e^{-\lambda} + \omega^{-\frac{1}{2}}
		\leq\omega^{- \frac{1}{3}},
	\end{align*}
	which completes the proof.
\end{proof}

As a corollary, one can calculate the probability that $r$ perfect matchings, each chosen independently and uniformly at random, are (edge-)disjoint. 
This relates to a problem of Ferber, H\"anni, and Jain \cite{ferber2020probability}, 
which asks for the probability of selecting $r$ edge-disjoint copies of a graph~$H$ in a host graph $G$. 
They answer this question for Hamilton cycles in the complete graph. 
The following \lcnamecref{cor:r} is an analogue for perfect matchings in the more general class of robust expanders. The proof follows immediately from \cref{thm:reg} by induction on $r$.

\begin{cor}\label{cor:r}
	For any $\delta> 0$, there exists $\tau >0$ such that for all $\nu>0$, there exists $n_0\in \mathbb{N}$ for which the following holds. 
	Let $n\geq n_0$ be even, $d\geq \delta n$, and $r\leq n^{\frac{1}{50}}$. 
	Then, for any $d$-regular robust $(\nu, \tau)$-expander $G$ on $n$ vertices and independent $M_1,\dots, M_r\sim U(\cP(G))$, we have
	\[\mathbb{P}[M_1,\ldots,M_r \text{ are disjoint}]= (1+o_n(1))e^{-\frac{n}{2d}{r\choose 2}}.\]
\end{cor}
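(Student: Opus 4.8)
The plan is to prove \cref{cor:r} by induction on $r$, using \cref{thm:reg} at each step to show that the union of $r-1$ disjoint matchings behaves like a fixed $(r-1)$-regular subgraph when conditioned upon. First I would set up the induction: the base case $r=1$ is trivial since a single uniform perfect matching is automatically ``disjoint'', contributing the empty product $e^{0}=1$. For the inductive step, I would write the target probability as a telescoping product
\begin{align*}
	\mathbb{P}[M_1,\dots,M_r \text{ disjoint}]
	&=\prod_{i=2}^{r}\mathbb{P}\big[M_i\cap(M_1\cup\dots\cup M_{i-1})=\emptyset \ \big|\ M_1,\dots,M_{i-1}\text{ disjoint}\big],
\end{align*}
and analyse a single generic factor, say the probability that $M_r$ avoids the already-chosen disjoint matchings $M_1,\dots,M_{r-1}$.

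The key observation is that conditioned on $M_1,\dots,M_{r-1}$ being pairwise disjoint, the graph $N\coloneqq M_1\cup\dots\cup M_{r-1}$ is a spanning $(r-1)$-regular subgraph of $G$ (a disjoint union of perfect matchings), and $M_r$ is still a uniformly chosen perfect matching independent of this conditioning. Hence I would apply \cref{thm:reg} with $N$ playing the role of the fixed $(r-1)$-regular subgraph and $M\coloneqq M_r$. Writing $X\coloneqq |M_r\cap E(N)|$, the event that $M_r$ is disjoint from all previous matchings is exactly $\{X=0\}$. By \cref{thm:reg}, $X$ is within total variation distance $o_n(1)$ of $Y\sim\Po\big(\frac{(r-1)n}{2d}\big)$, so
\begin{align*}
	\mathbb{P}[X=0]=(1+o_n(1))\,\mathbb{P}[Y=0]=(1+o_n(1))\,e^{-\frac{(r-1)n}{2d}}.
\end{align*}
Multiplying these factors over $i=2,\dots,r$ gives the exponent $\sum_{i=2}^{r}(i-1)\frac{n}{2d}=\binom{r}{2}\frac{n}{2d}$, which is exactly the claimed $e^{-\frac{n}{2d}\binom{r}{2}}$.

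Two points require care, and I would flag the first as the main obstacle. The genuine subtlety is that \cref{thm:reg} is stated for a \emph{fixed} (deterministic) $r$-regular subgraph $N$, whereas here $N=M_1\cup\dots\cup M_{r-1}$ is itself random. The clean way around this is to condition on the outcome of $(M_1,\dots,M_{r-1})$: for each fixed disjoint realisation, $N$ is a fixed $(r-1)$-regular subgraph and \cref{thm:reg} applies with a uniform $o_n(1)$ bound (the bound in \cref{thm:reg} depends only on $\delta,\tau,\nu,n$ and the regularity $r-1\le n^{1/50}$, not on the particular $N$), and then I average over the conditioning. Since the $o_n(1)$ error is uniform, the multiplicative factor $(1+o_n(1))$ survives the averaging. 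Second, I must track that the error terms multiply correctly: since $r\le n^{1/50}$ is bounded by a fixed power of $n$ and each factor is $(1+o_n(1))$, the product of at most $r$ such factors is still $(1+o_n(1))$, using that $(1+o_n(1))^{r}=1+o_n(1)$ when $r$ grows slowly enough relative to the decay of the error; one checks this using the explicit $\omega^{-1/3}$-type bound from the proof of \cref{thm:reg}. With these two points handled, the induction closes and the \lcnamecref{cor:r} follows.
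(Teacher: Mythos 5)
Your proposal is correct and takes essentially the same route as the paper: induction on $r$, conditioning on the first $i-1$ matchings being pairwise disjoint so that their union is a spanning $(i-1)$-regular subgraph, and applying \cref{thm:reg} to each telescoping factor, with the exponents summing to $\binom{r}{2}\frac{n}{2d}$. The one point worth making explicit is that the multiplicative estimate $\mathbb{P}[X=0]=(1+o_n(1))e^{-\lambda}$ should be drawn from the pointwise bound $p_0=(1\pm 5(\omega\lambda)^{-\frac{1}{2}})e^{-\lambda}$ inside the proof of \cref{thm:reg} rather than from the total variation conclusion alone, since for large $r$ the quantity $e^{-\lambda}$ can be far smaller than the additive $o_n(1)$ error that a total variation bound provides --- which is precisely the ``explicit bound'' you gesture at in your final paragraph.
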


\COMMENT{\begin{proof}
	We proceed by induction on $r$. The case $r=1$ is vacuously true.
	The case $r=2$ follows immediately from \cref{thm:reg}. Let $M_1,\dots, M_{r+1}\in U(\cP(G))$. Denote by $\cE$ the event that $M_i\cap M_j=\emptyset$ for all $1\leq i< j\leq r$ and define $N\coloneqq \bigcup_{i\in [r]}M_i$.
	Suppose inductively that
	\[\mathbb{P}[\cE]= (1+o_n(1))e^{-\frac{n}{2d}{r\choose 2}}.\]
	Then, \cref{thm:reg} implies that
	\begin{align*}
		\mathbb{P}[\forall 1\leq i< j\leq r+1 \colon M_i\cap M_j=\emptyset]&=\mathbb{P}[\cE]\mathbb{P}[M_r\cap N=\emptyset\mid \cE]\\
		&=(1+o_n(1))e^{-\frac{n}{2d}{r\choose 2}}\cdot (1+o_n(1))e^{-\frac{rn}{2d}}\\
		&=(1+o_n(1))e^{-\frac{n}{2d}{r+1\choose 2}},
	\end{align*}
	as desired.
\end{proof}}

\subsection{Bipartite graphs}

Of particular interest are perfect matchings in (balanced) bipartite graphs, but bipartite graphs are not robust expanders as the neighbourhood of one of the partition classes is only at most as large as the class itself.
However, the notion of robust expanders can be adapted to bipartite graphs.
Let $G$ be a bipartite graph with vertex partition $(A,B)$ and $|A|=|B|=n$.
We say that $G$ is a \emph{bipartite robust $(\nu, \tau)$-expander} if $|RN_{\nu,G}(S)| \geq  |S| + \nu n$ for each 
$S\subseteq A$ satisfying $\tau n\leq |S| \leq  (1 - \tau)n$.

The following is an analogue of \cref{thm:edge,thm:main,thm:reg} for bipartite graphs. 
This then also includes an approximation for the number of derangements.

\begin{thm}\label{thm:bipartite}
	For any $\delta> 0$, there exists $\tau >0$ such that for all $\nu>0$, there exists $n_0\in \mathbb{N}$ for which the following holds. 
	Let $n\geq n_0$, $d\geq \delta n$, and $r\leq n^{\frac{1}{50}}$. Let $G$ be a balanced bipartite $d$-regular robust $(\nu, \tau)$-expander on $2n$ vertices and suppose that $N$ is a matching in $G$ or a spanning $r$-regular subgraph of $G$.	
	Let $M\sim U(\cP(G))$, let $X\coloneqq |M\cap E(N)|$, let $Y\sim \Po(d^{-1}e(N))$, and let $e\in E(G)$. Then, $\bP[e\in M]= (1+o_n(1))d^{-1}$ and
	$d_{\rm TV}(X,Y)=o_n(1)$. 
\end{thm}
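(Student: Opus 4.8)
The plan is to reduce the bipartite statement to the machinery already built for the non-bipartite case, since the only genuine obstacle is that a balanced bipartite graph fails to be a robust expander in the undirected sense (the neighbourhood of one whole side cannot exceed the side). The key observation is that a balanced bipartite graph with parts $(A,B)$ and a perfect matching naturally carries a \emph{directed} structure: orienting each potential matching edge from $A$ to $B$ and each non-matching adjacency appropriately lets one re-express perfect matchings as the images of bijections $A\to B$, and alternating cycles become closed alternating walks in a digraph on $A$ (or on $B$). So first I would record that a bipartite robust $(\nu,\tau)$-expander gives rise to a robust $(\nu/2, c\tau)$-\emph{outexpander} on $n$ vertices after the standard contraction that identifies each vertex of $B$ with its matching partner in $A$; this is exactly the move used inside the proof of \cref{claim:ratiok-1}, where the auxiliary digraph $D$ is built by composing a non-matching step in $G$ with a matching step. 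The bipartite hypothesis is in fact the cleaner setting for this construction.

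Second, I would prove the bipartite analogue of \cref{lm:ratio} verbatim. Fix $\ell\coloneqq \lceil\log^2 n\rceil+1$, form the auxiliary bipartite graph $H$ on vertex classes $\cM_k$ and $\cM_{k-1}$ joining $M$ and $M'$ when $M\triangle M'$ is a single alternating $2\ell$-cycle meeting $E(N)$ in exactly one edge, and estimate the two degrees. For $M'\in\cM_{k-1}$ and a candidate edge $e=uv\in E(N)\setminus M'$, the alternating cycles through $e$ correspond bijectively to alternating $(w,v)$-paths of length $2\ell-2$ avoiding $E(N)\cup M'$ on the $M'$-steps, exactly as before; contracting along $M'$ produces a $(\delta, m^{-1/2})$-almost regular robust $(\nu/2, c\tau)$-outexpander $D$ on $m=n-O(k)$ vertices, and \cref{lm:countpaths} counts the paths as $(1\pm 2n^{-1/6})\delta^{\ell-1}n^{\ell-2}$. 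Summing over the $f(N,k)$ admissible edges gives $d_H(M')=(1\pm 3n^{-1/6})\delta^{\ell-1}n^{\ell-2}f(N,k)$, and the symmetric computation gives $d_H(M)=(1\pm 3n^{-1/6})k\delta^\ell n^{\ell-1}$ for $M\in\cM_k$. Double counting $e(H)$ then yields $\frac{|\cM_k|}{|\cM_{k-1}|}=(1\pm n^{-1/7})\frac{f(N,k)}{k\delta n}$, with $\delta=d/n$.

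Third, with the ratio estimate in hand, all three conclusions follow by the arguments already written in the excerpt with no essential change. The edge bound $\bP[e\in M]=(1+o_n(1))d^{-1}$ comes from applying the bipartite \cref{lm:ratio} with $N=\{e\}$ and $k=1$, reproducing the proof of \cref{thm:edge}. The Poisson approximation in total variation for $X=|M\cap E(N)|$ follows by running the proof of \cref{thm:main} when $N$ is a matching, and the proof of \cref{thm:reg} when $N$ is spanning $r$-regular: split according to whether $|N|$ (respectively $\lambda=d^{-1}e(N)$) is small or large, use the union bound and \cref{thm:edge} in the small regime, and in the large regime convert the ratio $p_k/p_{k-1}=(1\pm\omega^{-3})\lambda/k$ into $p_k=(1\pm O(\omega^{-1/2}))\frac{\lambda^k}{k!}e^{-\lambda}$ over the range $k\le\omega$ (or $k\le\omega\lambda$), with Markov's inequality controlling the tails of both $X$ and $Y$.

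The main obstacle is entirely in the first step: verifying that the contracted digraph $D$ inherits robust outexpansion from the bipartite expander with the correct parameters. One must check that a set $S\subseteq V(D)$ with $\tau n\le|S|\le(1-\tau)n$, viewed back in $A$, has $|RN_{\nu,G}(S)|\ge|S|+\nu n$, and that composing with the matching step to land in $V(D)$ loses only the $O(n-m)$ vertices removed and only a factor $2$ in the expansion constant $\nu$. Once the parameters $(\nu/2, c\tau)$ are pinned down and shown to satisfy the hierarchy $0<\frac1n\ll\nu\le\tau\ll\delta\le1$ required by \cref{lm:countpaths,lm:ratio}, everything downstream is a transcription of the existing proofs, so I expect the writeup to be short.
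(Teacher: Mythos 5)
Your proposal is correct and follows essentially the same route as the paper, which proves \cref{thm:bipartite} by rerunning the proof of \cref{lm:ratio} with the auxiliary digraph $D$ now built on one partition class of $G$ minus $V(M'\cap E(N))$ (via $N_D^+(x)=N_{M'}(N_{G-N-M'}(x)\cap V(D))$), checking it is an almost regular robust outexpander, and then transcribing the proofs of \cref{thm:edge,thm:main,thm:reg}. Your identification of the inheritance of robust outexpansion by the contracted digraph as the only point requiring verification matches the paper's own assessment.
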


This result can be obtained by almost the very same proof. The only adaptation needed is when constructing the auxiliary digraph $D$ in \cref{lm:ratio}, where the vertex set of $D$ is now one partition class of $G$ minus $V(M\cap E(N))$. This digraph $D$ is then a robust outexpander in the sense as before.%
	\COMMENT{Formally, define $D$ as follows.\\
	Let $M\in \cM_{k-1}$ and $e=uv\in E(N)\setminus M$ such that $u,v\notin V(E(N)\cap M)$. Denote by $w$ the (unique) neighbour of $u$ in $M$ and $A$ the partition class of $G$ which contains $w$.	
	Define an auxiliary digraph $D$ on $A\setminus V(E(N)\cap M)$ as follows. Denote $m\coloneqq |V(D)|=n-2(k-1)$ and, for any $x\in V(D)$, let $N_D^+(x)\coloneqq N_M(N_{G-N-M}(x)\cap V(D))$. By the same arguments as in \cref{lm:ratio}, $D$ is a $(\delta,m^{\frac{1}{2}})$-almost regular robust $(\frac{\nu}{2},\tau)$-outexpander.}

\subsection{Almost regular graphs}

One can also adapt our proofs to graphs that are only almost regular, say $(\delta,f)$-regular.
Then, clearly, a statement similar to \cref{thm:edge} has to include an error term that depends on $f$.
If $f$ is small, then the conclusion of \cref{thm:edge} can stay the same.
On the other hand, if $f$ is large, say a small fixed constant, then
the error term becomes $(1\pm \gamma)$ for some $\gamma>0$ that is a function of $f$ which tends to $0$ as $f$ tends to $0$. 
Observe that in this case, the length of paths considered for example in \cref{lm:countpaths} has to be reduced to a value that is independent of $n$ as otherwise the error term coming from the degree irregularity becomes too large.
For $\varepsilon$-super-regular bipartite graphs, such a result has been proven by the second author in \cite{kim2019blow}, using results of Alon, R\"odl, and Ruci\'nski \cite{alon1998perfect}. 

\section*{Acknowledgements}

We are grateful to the referees for helpful comments on an earlier version of this paper.

\bibliographystyle{abbrv}
\bibliography{bibliography.bib}

\end{document}